\newtheoremstyle{dotless}{}{}{}{}{\bfseries}{}{\newline}{}
\theoremstyle{dotless}
\newtheorem{Definition}{Definition}
\numberwithin{Definition}{section}
\newtheorem{Minimal projective resolutions and minimal injective resolutions in Nakayama algebras}[Definition]{Minimal projective resolutions and minimal injective resolutions in Nakayama algebras}
\newtheorem{Length of the indecomposable left module}[Definition]{Length of the indecomposable left module}
\newtheorem{Corollary}[Definition]{Corollary}
\newtheorem{Lemma}[Definition]{Lemma}
\newtheorem{Structure of indecomposable injective modules}[Definition]{Structure of indecomposable injective modules}
\newtheorem{Calculation of dominant dimension for a Nakayama algebra}[Definition]{Calculation of dominant dimension for a Nakayama algebra}
\newtheorem{Theorem of Mueller}[Definition]{Theorem of Mueller}
\newtheorem{Conjecture about Delta A}[Definition]{Conjecture about $\Delta_A$}
\newtheorem{Two conjectures}[Definition]{Two conjectures}
\newtheorem{lemma}[Definition]{Lemma}
\newtheorem{theorem}[Definition]{Theorem}
\newtheorem*{Result 1}{Result 1}
\newtheorem*{Result 2}{Result 2}
\newtheorem*{Theorem A}{Theorem A}
\newtheorem*{Theorem B}{Theorem B}
\newtheorem{example}[Definition]{Example}
\newtheorem{proposition}[Definition]{Proposition}
\newtheorem{Proposition}[Definition]{Proposition}
\newtheorem{Weak Gorenstein projective conjecture(WGPC)}[Definition]{Weak Gorenstein projective conjecture(WGPC)}
\newtheorem{Gorenstein projective conjecture(GPC)}[Definition]{Gorenstein projective conjecture(GPC)}
\newtheorem*{Proof*}{Proof}
\newtheorem{examples}[Definition]{Examples}
\newtheorem{remark}[Definition]{Remark}
\renewcommand*\env@matrix[1][\
arraystretch]{%
  \edef\arraystretch{#1}%
  \hskip -\arraycolsep
  \let\@ifnextchar\new@ifnextchar
  \array{*\c@MaxMatrixCols c}}
\begin{document}

\title{A bocs theoretic characterization of gendo-symmetric algebras}

\author{Ren\'{e} Marczinzik}

\subjclass[2010]{Primary 16G10, 16E10}

\keywords{Representation theory of finite dimensional algebras, corings, dominant dimension}

\begin{abstract}
Gendo-symmetric algebras were recently introduced by Fang and K\"onig in \cite{FanKoe}. An algebra is called gendo-symmetric in case it is isomorphic to the endomorphism ring of a generator over a finite dimensional symmetric algebra.
We show that a finite dimensional algebra $A$ over a field $K$ is gendo-symmetric if and only if there is a bocs-structure on $(A,D(A))$, where $D=Hom_K(-,K)$ is the natural duality. Assuming that $A$ is gendo-symmetric, we show that the module category of the bocs $(A,D(A))$ is isomorphic to the module category of the algebra $eAe$, when $e$ is an idempotent such that $eA$ is the unique minimal faithful projective-injective right $A$-module. We also prove some new results about gendo-symmetric algebras using the theory of bocses.
\end{abstract}

\maketitle

\section*{Introduction}
A bocs is a generalization of the notion of coalgebra over a field. Bocses are also known under the name coring (see the book \cite{BreWis}). A famous application of bocses has been the proof of the tame and wild dichtomy theorem by Drozd for finite dimensional algebras over an algebraically closed field (see \cite{Dro} and the book \cite{BSZ}). For any given bocs $(A,W)$ over a finite dimensional algebra, one can define a corresponding module category and analyze it. Given a finite dimensional algebra $A$ over a field $K$, it is an interesting question whether for a given $A$-bimodule $W$, there exists a bocs structure on $(A,W)$. The easiest example to consider is the case $W=A$ and in this case the module category one gets is just the module category of the algebra $A$. Every finite dimensional algebra has a duality $D=Hom_K(-,K)$ and so the next example of an $A$-bimodule to consider is perhaps $W=D(A)$. We will characterize all finite dimensional algebras $A$ such that there is a bocs structure on $(A,D(A))$ and find a surprising connection to a recently introduced class of algebras generalizing symmetric algebras (see \cite{FanKoe2}). Those algebras are called gendo-symmetric and are defined as endomorphism rings of generators of symmetric algebras. Alternatively these are the algebras $A$, where there exists an idempotent $e$ such that $eA$ is a minimal faithful injective-projective module and $D(Ae) \cong eA$ as $(eAe,A)$-bimodules. Then $eAe$ is the symmetric algebra such that $A \cong End_{eAe}(M)$, for an $eAe$-module $M$ that is a generator of mod-$eAe$. Famous examples of non-symmetric gendo-symmetric algebras are Schur algebras $S(n,r)$ with $n \geq r$ and blocks of the Bernstein-Gelfand-Gelfand category $\mathcal{O}$ of a complex semisimple Lie algebra (for a proof of this, using methods close to ours, see \cite{KSX} and for applications see \cite{FanKoe3}). The first section provides the necessary background on bocses and algebras with dominant dimension larger or equal 2.
The second section proves our main theorem: 
\newpage
\begin{Theorem A} 
(Theorem \hyperref[mainresult]{ \ref*{mainresult}}) \newline
A finite dimensional algebra $A$ is gendo-symmetric if and only if $(A,D(A))$ has a bocs-structure.
\end{Theorem A}

We also provide some new structural results about gendo-symmetric algebras in this section. For example we show, using bocs-theoretic methods, that the tensor product over the field $K$ of two gendo-symmetric algebras is again gendo-symmetric and we proof that $Hom_{A^{e}}(D(A),A)$ is isomorphic to the center of $A$, where $A^{e}$ denotes the enveloping algebra of $A$. \newline
In the final section, we describe the module category $\mathcal{B}$ of the bocs $(A,D(A))$ in case $A$ is gendo-symmetric.
The following is our second main result:
\begin{Theorem B}
(Theorem \hyperref[mainresult2]{ \ref*{mainresult2}}) \newline
Let $A$ be a gendo-symmetric algebra with minimal faithful projective-injective module $eA$. Then the module category of the bocs $(A,D(A))$ is equivalent to $eAe$-mod as $K$-linear categories.
\end{Theorem B}

I thank Steffen K\"onig for useful comments and proofreading. I thank Julian K\"ulshammer for providing me with an early copy of his article \cite{Kue}.

\section{Preliminaries}
We collect here all needed definitions and lemmas to prove the main theorems.
Let an algebra always be a finite dimensional algebra over a field $K$ and a module over such an algebra is always a finite dimensional right module, unless otherwise stated. $D=Hom_A(-,K)$ denotes the duality for a given finite dimensional algebra $A$. $mod-A$ denotes the category of finite dimensional right $A$-modules and $proj$ ($inj$) denotes the subcategory of finitely generated projective (injective) $A$-modules.
We note that we often omit the index in a tensor product, when we calculate with elements. We often identify $A \otimes_A X \cong X$ for an $A$-module $X$ without explicitly mentioning the natural isomorphism.
The Nakayama functor $\nu : mod-A \rightarrow mod-A$ is defined as $DHom_A(-,A)$ and is isomorphic to the functor $(-) \otimes_A D(A)$. The inverse Nakayama functor $\nu^{-1} : mod-A \rightarrow mod-A$ is defined as $Hom_{A^{op}}(-,A)D$ and is isomorphic to the functor $Hom_A(D(A),-)$ (see \cite{SkoYam} Chapter III section 5 for details). The Nakayama functors play a prominent role in the representation theory of finite dimensional algebras, since $\nu: proj \rightarrow inj$ is an equivalence with inverse $\nu^{-1}$. For example they appear in the definition of the Auslander-Reiten translates $\tau$ and $\tau^{-1}$ (see \cite{SkoYam} Chapter III. for the definitions):
\begin{proposition}
\label{nakafunc}
Let $M$ be an $A$-module with a minimal injective presentation $0 \rightarrow M \rightarrow I_0 \rightarrow I_1$. Then the following sequence is exact: \newline
$0 \rightarrow \nu^{-1}(M) \rightarrow \nu^{-1}(I_0) \rightarrow \nu^{-1}(I_1) \rightarrow \tau^{-1}(M) \rightarrow 0$.
\end{proposition}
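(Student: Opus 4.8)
The plan is to reduce the claim to the standard four-term exact sequence defining the transpose $\operatorname{Tr}$, by dualizing the given injective presentation. Recall from the definitions that $\tau^{-1} = \operatorname{Tr}D$ and that the inverse Nakayama functor is $\nu^{-1} = Hom_{A^{op}}(-,A)D$, that is, $\nu^{-1}(X) = Hom_{A^{op}}(D(X),A)$ for a right $A$-module $X$.

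First I would apply the duality $D$ to the minimal injective presentation $0 \rightarrow M \rightarrow I_0 \rightarrow I_1$. Since $D$ is exact and contravariant, this yields an exact sequence $D(I_1) \rightarrow D(I_0) \rightarrow D(M) \rightarrow 0$ of left $A$-modules. Because $D$ interchanges injective and projective modules and carries minimal (co)presentations to minimal (co)presentations, the modules $D(I_0)$ and $D(I_1)$ are projective and this sequence is in fact a minimal projective presentation of the left module $D(M)$.

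Next I would apply the contravariant functor $Hom_{A^{op}}(-,A)$ to this minimal projective presentation. Since $Hom_{A^{op}}(-,A)$ is left exact and $\operatorname{Tr}(D(M))$ is by definition the cokernel of the induced map on the projective terms, this produces the exact sequence
\[
0 \rightarrow Hom_{A^{op}}(D(M),A) \rightarrow Hom_{A^{op}}(D(I_0),A) \rightarrow Hom_{A^{op}}(D(I_1),A) \rightarrow \operatorname{Tr}(D(M)) \rightarrow 0.
\]
It then only remains to identify the terms: by the formula $\nu^{-1} = Hom_{A^{op}}(-,A)D$ the first three terms are $\nu^{-1}(M)$, $\nu^{-1}(I_0)$ and $\nu^{-1}(I_1)$ respectively, while $\operatorname{Tr}(D(M)) = \tau^{-1}(M)$ by the definition of $\tau^{-1}$. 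Substituting these identifications gives precisely the asserted exact sequence.

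The main point to check carefully is that applying $D$ to a \emph{minimal} injective presentation really does yield a \emph{minimal} projective presentation of $D(M)$, so that the transpose sequence obtained is genuinely the one attached to $\tau^{-1}(M)$; one must also keep track of the left/right module conventions so that each $Hom$-space lands in $mod\text{-}A$ rather than in $A\text{-}mod$. Both are routine consequences of $D$ being a duality that exchanges projectives and injectives, and the four-term exactness is the standard property of the transpose construction, so no genuinely delicate estimate is involved.
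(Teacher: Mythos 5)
Your proof is correct. The paper itself gives no argument for this proposition; it simply cites \cite{SkoYam}, Chapter III, Proposition 5.3(ii). What you have written out is precisely the standard proof underlying that citation: dualize the minimal injective presentation to get a minimal projective presentation of $D(M)$ over $A^{op}$, apply $Hom_{A^{op}}(-,A)$, and use the defining four-term exact sequence of the transpose together with the identifications $\nu^{-1} = Hom_{A^{op}}(-,A)D$ and $\tau^{-1} = \operatorname{Tr}D$. You also correctly flag the one point that needs care, namely that $D$ sends injective envelopes to projective covers so that minimality is preserved and the cokernel is genuinely $\tau^{-1}(M)$ rather than $\tau^{-1}(M)$ up to projective summands; so the argument is complete and matches the approach of the cited source.
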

\begin{proof}
See \cite{SkoYam}, Chapter III. Proposition 5.3. (ii).
\end{proof}
\noindent The \textit{dominant dimension} domdim($M$) of a module $M$ with a minimal injective resolution $(I_i): 0 \rightarrow M \rightarrow I_0 \rightarrow I_1 \rightarrow ...$  is defined as: \newline
domdim($M$):=$\sup \{ n | I_i $ is projective for $i=0,1,...,n \}$+1, if $I_0$ is projective, and \newline domdim($M$):=0, if $I_0$ is not projective. \newline
The dominant dimension of a finite dimensional algebra is defined as the dominant dimension of the regular module $A_A$.
It is well-known that an algebra $A$ has dominant dimension larger than or equal to 1 iff there is an idempotent $e$ such that $eA$ is a minimal faithful projective-injective module. The Morita-Tachikawa correspondence (see \cite{Ta} for details) says that the algebras, which are endomorphism rings of generator-cogenerators are exactly the algebras with dominant dimension at least 2. The full subcategory of modules of dominant dimension at least $i \geq 1$ is denoted by $Dom_i$.
$A$ is called a \textit{Morita algebra} iff it has dominant dimension larger than or equal to 2 and $D(Ae) \cong eA$ as $A$-right modules. This is equivalent to $A$ being isomorphic to $End_B(M)$, where $B$ is a selfinjective algebra and $M$ a generator of mod-$B$ (see \cite{KerYam}).
$A$ is called a \textit{gendo-symmetric algebra} iff it has dominant dimension larger than or equal to 2 and $D(Ae) \cong eA$ as $(eAe,A)-$bimodules iff it has dominant dimension larger than or equal to 2 and $D(eA) \cong Ae$ as $(A,eAe)$-bimodules. This is equivalent to $A$ being isomorphic to $End_B(M)$, where $B$ is a symmetric algebra and $M$ a generator of mod-$B$ and in this case $B=eAe$ (see \cite{FanKoe}).

\begin{Proposition}
\label{domdim2}
Let $A$ be a gendo-symmetric algebra and $M$ an $A$-module. Then $M$ has dominant dimension larger or equal to two iff  $\nu^{-1}(M) \cong M$.
\end{Proposition}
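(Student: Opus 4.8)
The plan is to reduce the statement to the Morita--Tachikawa description of $Dom_2$ together with a single bimodule identity that encodes the gendo-symmetric hypothesis. Write $B=eAe$ and consider the adjoint pair $F=(-)e\cong Hom_A(eA,-)\colon mod\text{-}A\to mod\text{-}B$ and its right adjoint $G=Hom_B(Ae,-)\colon mod\text{-}B\to mod\text{-}A$, where $Ae$ is regarded as an $(A,B)$-bimodule. Since $domdim(A)\ge 2$, the Morita--Tachikawa correspondence (see \cite{Ta}) tells us that $F$ and $G$ restrict to mutually inverse equivalences between $Dom_2$ and $mod\text{-}B$; in particular the unit $\eta_M\colon M\to GF(M)$ is an isomorphism exactly when $M\in Dom_2$, and every module in the image of $G$ lies in $Dom_2$. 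Unravelling the functors and using the tensor--hom adjunction gives
\[
GF(M)\cong Hom_B(Ae,Me)\cong Hom_B(Ae,Hom_A(eA,M))\cong Hom_A(Ae\otimes_B eA,M).
\]

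The key step, and the only place where the gendo-symmetric hypothesis is really used, is the bimodule identity
\[
D(A)\cong Ae\otimes_{eAe}eA\qquad\text{as }(A,A)\text{-bimodules.}
\]
I would prove this by dualising. First the tensor--hom adjunction gives $D(Ae\otimes_B eA)\cong Hom_B(eA,D(Ae))$; then the gendo-symmetric isomorphism $D(Ae)\cong eA$ of $(eAe,A)$-bimodules rewrites this as $Hom_B(eA,eA)=End_B(eA)$; and finally the double-centraliser property of an algebra of dominant dimension at least two, $A\cong End_B(eA)$, yields $D(Ae\otimes_B eA)\cong A$. Applying $D$ once more gives the claimed identity. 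Combined with the computation of $GF$ above and Yoneda, this produces a natural isomorphism of functors
\[
\nu^{-1}(-)=Hom_A(D(A),-)\cong Hom_A(Ae\otimes_{eAe}eA,-)\cong GF(-).
\]

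With these two ingredients both implications follow quickly. If $domdim(M)\ge 2$, then $\eta_M\colon M\to GF(M)\cong\nu^{-1}(M)$ is an isomorphism, so $\nu^{-1}(M)\cong M$. Conversely, if $\nu^{-1}(M)\cong M$, then $GF(M)\cong M$; but $GF(M)=G(F(M))$ lies in the image of $G$, hence in $Dom_2$, and since $Dom_2$ is closed under isomorphism we conclude $M\in Dom_2$. As an independent check on the forward direction one can bypass Morita--Tachikawa entirely: applying the left-exact functor $\nu^{-1}$ to a minimal injective copresentation $0\to M\to I_0\to I_1$ with $I_0,I_1\in add(eA)$, and using that $\nu^{-1}$ is naturally isomorphic to the identity on the projective-injective modules $add(eA)$ (because $\nu(eA)\cong D(Ae)\cong eA$, and under $add(eA)\simeq proj\text{-}eAe$ the Nakayama functor corresponds to that of the symmetric algebra $eAe$), identifies $\nu^{-1}(M)$ with $\ker(I_0\to I_1)=M$; the relevant exactness of the first three terms is also part of Proposition \ref{nakafunc}.

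The main obstacle is the bimodule identity $D(A)\cong Ae\otimes_{eAe}eA$: one must run the chain of adjunction isomorphisms while keeping track of all the one-sided module structures, so that the final isomorphism is genuinely one of $(A,A)$-bimodules (equivalently, of right $A$-modules after the last application of $D$). It is exactly here that the bimodule form $D(Ae)\cong eA$, rather than merely a one-sided isomorphism, is needed; for a general Morita algebra the one-sided version would not force $\nu^{-1}$ to agree with $GF$ as functors, and hence would not yield the fixed-point characterisation.
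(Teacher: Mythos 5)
Your proposal is correct, but there is nothing in the paper to compare it against line by line: the paper does not prove this proposition at all, it simply cites \cite{FanKoe2}, Proposition 3.3. What you have done is essentially reconstruct the Fang--K\"onig argument from scratch, and the reconstruction is sound. Your ``key step'', the bimodule identity $D(A)\cong Ae\otimes_{eAe}eA$, is exactly the second assertion of Lemma \hyperref[anotherchara]{\ref*{anotherchara}} (which the paper likewise only cites); your derivation of it --- dualise, insert the gendo-symmetric isomorphism $D(Ae)\cong eA$ of $(eAe,A)$-bimodules, then apply the double-centraliser property $A\cong End_{eAe}(eA)$ that follows from $domdim(A)\geq 2$ --- is valid, and you correctly track the one-sided structures so that the final isomorphism is one of $(A,A)$-bimodules. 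The categorical input you attribute to the Morita--Tachikawa correspondence (that $(-)e$ and $Hom_{eAe}(Ae,-)$ are inverse equivalences between $Dom_2$ and mod-$eAe$, that the unit $M\to Hom_{eAe}(Ae,Me)$ is invertible precisely on $Dom_2$, and that the image of $Hom_{eAe}(Ae,-)$ lies in $Dom_2$) is more precisely \cite{APT} Lemma 3.1 combined with Proposition \hyperref[APT]{\ref*{APT}}, the very facts the paper itself invokes in the proof of Theorem \hyperref[mainresult2]{\ref*{mainresult2}}; the attribution is loose but the facts are standard and correctly deployed. The point where a blind attempt would most likely fail --- the converse direction, where only an \emph{abstract} isomorphism $\nu^{-1}(M)\cong M$ is assumed, so one cannot argue via the canonical map on $M$ --- you handle correctly, by observing that $GF(M)$ always lies in $Dom_2$ and that dominant dimension is an isomorphism invariant. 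Your closing remark is also apt: for a Morita algebra that is not gendo-symmetric (e.g.\ a self-injective algebra with nontrivial Nakayama permutation) the statement is false, so the bimodule form of $D(Ae)\cong eA$ is genuinely needed and your proof uses it at exactly the right spot.
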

\begin{proof}
See \cite{FanKoe2}, proposition 3.3.
\end{proof}
The following result gives a formula for the dominant dimension of Morita algebras:
\begin{Proposition}
\label{APT}
Let $A$ be a Morita algebra with minimal faithful projective-injective module $eA$ and $M$ an $A$-module. Then $domdim(M)= \inf \{ i \geq 0 | Ext^{i}(A/AeA,M) \neq 0 \}$. Especially, $Hom_A(A/AeA,A)=0$ for every Morita algebra, since they always have dominant dimension at least 2.
\begin{proof}
This is a special case of \cite{APT}, Proposition 2.6.
\end{proof}
\end{Proposition}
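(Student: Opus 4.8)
The plan is to compute $\operatorname{Ext}^i(A/AeA,M)$ from a minimal injective resolution of $M$ and to show that a term in that resolution fails to be ``seen'' by $\operatorname{Hom}_A(A/AeA,-)$ exactly when it is projective-injective; the Morita hypothesis $D(Ae)\cong eA$ is what makes this work. First I would record two elementary identities valid for every right module $X$. Evaluating a homomorphism at $\overline{1}$ gives $\operatorname{Hom}_A(A/AeA,X)\cong\{x\in X : xAe=0\}$. Combining $eA\cong D(Ae)$ (as right $A$-modules) with the adjunction $\operatorname{Hom}_A(X,D(Ae))\cong D(X\otimes_A Ae)$ and the standard isomorphism $X\otimes_A Ae\cong Xe$ yields the crucial formula $\operatorname{Hom}_A(X,eA)\cong D(Xe)$. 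In particular $\operatorname{Hom}_A(A/AeA,eA)\cong D\big((A/AeA)e\big)=0$, since $Ae\subseteq AeA$ forces $(A/AeA)e=0$.

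The heart of the argument is a base case: for every module $N$ one has $\operatorname{Hom}_A(A/AeA,N)=0$ if and only if $domdim(N)\geq 1$, i.e.\ the injective envelope $I_0(N)$ is projective (equivalently $I_0(N)\in\operatorname{add}(eA)$, the class of projective-injectives). For the implication $\Leftarrow$, an embedding $N\hookrightarrow P$ with $P\in\operatorname{add}(eA)$ together with left exactness of $\operatorname{Hom}_A(A/AeA,-)$ reduces the claim to $\operatorname{Hom}_A(A/AeA,eA)=0$, already established. For the converse I argue contrapositively: if $I_0(N)\notin\operatorname{add}(eA)$, then some simple $S\subseteq\operatorname{soc}(N)$ has $I(S)\notin\operatorname{add}(eA)$, equivalently $S$ does not embed into the injective module $eA$, equivalently $\operatorname{Hom}_A(S,eA)=0$; by the formula $\operatorname{Hom}_A(S,eA)\cong D(Se)$ this says precisely that $Se=0$. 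Then $S\subseteq N$ is an $A/AeA$-submodule and any $0\neq x\in S$ satisfies $xAe=0$, so $\operatorname{Hom}_A(A/AeA,N)\neq 0$.

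With the base case in hand I would finish by dimension shifting along the minimal injective resolution $0\to M\to I_0\to I_1\to\cdots$. Since $\operatorname{Ext}^i(A/AeA,M)=H^i\big(\operatorname{Hom}_A(A/AeA,I_\bullet)\big)$ and each $I_j$ is injective, applying the base case to these injectives gives $\operatorname{Hom}_A(A/AeA,I_j)=0$ for all $j<d:=domdim(M)$, whence $\operatorname{Ext}^i(A/AeA,M)=0$ for $i<d$. In the critical degree, left exactness identifies $\operatorname{Ext}^d(A/AeA,M)$ with $\operatorname{Hom}_A(A/AeA,\Omega^{-d}M)$, where $\Omega^{-d}M=\ker(I_d\to I_{d+1})$ is the $d$-th cosyzygy, whose injective envelope is the non-projective module $I_d$; the base case then forces $\operatorname{Ext}^d(A/AeA,M)\neq 0$. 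Hence $\inf\{i\geq 0 : \operatorname{Ext}^i(A/AeA,M)\neq 0\}=d=domdim(M)$. The last sentence of the statement is the case $M=A$: a Morita algebra has $domdim(A)\geq 2$, so the infimum is at least $2$ and in particular $\operatorname{Hom}_A(A/AeA,A)=\operatorname{Ext}^0(A/AeA,A)=0$.

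The step I expect to be the main obstacle is the base case, specifically translating ``$I_0(N)$ is not projective'' into the existence of a simple submodule of $N$ annihilated by $e$. This is the only place the Morita hypothesis genuinely enters, through the identity $\operatorname{Hom}_A(S,eA)\cong D(Se)$ derived from $D(Ae)\cong eA$: without this isomorphism there is no reason for failure of cogeneration by $eA$ to coincide with annihilation by $e$, and the two sides of the claimed formula would no longer measure the same thing. Everything else is formal homological algebra once this dictionary between the socle of $N$ and the idempotent $e$ is available.
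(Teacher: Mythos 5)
Your proof is correct, but it takes a genuinely different route from the paper, whose ``proof'' consists of a single citation: the statement is obtained there as a special case of Proposition 2.6 of Auslander--Platzeck--Todorov \cite{APT} on idempotent ideals. That general result characterizes, for an arbitrary idempotent $e$ (no injectivity assumed), the vanishing of $Ext^{i}_A(A/AeA,M)$ in degrees $i \leq n$ by the existence of an exact sequence $0 \rightarrow M \rightarrow X_0 \rightarrow \dots \rightarrow X_n$ with all $X_i \in add(eA)$; when $eA$ is moreover the minimal faithful projective-injective module, such a sequence exists precisely when the first terms of the minimal injective resolution lie in $add(eA)$, which yields the dominant dimension formula. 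You instead prove the special case from scratch: the dictionary $Hom_A(X,eA) \cong D(Xe)$ coming from the Morita condition $eA \cong D(Ae)$, the base case $Hom_A(A/AeA,N)=0 \Leftrightarrow domdim(N) \geq 1$ via the socle of $N$, and dimension shifting along the minimal injective resolution, with the critical degree handled by left exactness. The trade-off: the citation is shorter and rests on a theorem valid far beyond Morita algebras, while your argument is self-contained, elementary, and isolates exactly where $D(Ae) \cong eA$ enters --- and it must enter, since for the path algebra of the quiver $1 \rightarrow 2$ with $e=e_1$ the module $eA$ is faithful and projective-injective, yet $Hom_A(S_2,eA) \neq 0$ while $S_2e=0$, and correspondingly $domdim(e_2A)=1$ although $Hom_A(A/AeA,e_2A) \neq 0$, so the formula fails without your key identity. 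Two details worth making explicit in a final write-up: that $add(eA)$ coincides with the class of projective-injective modules (this is what minimality of the faithful projective-injective module $eA$ guarantees), and that your argument also covers the degenerate cases $domdim(M)=0$ (the base case itself) and $domdim(M)=\infty$ (all $Ext$ groups vanish, so both sides are infinite).
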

The following lemma gives another characterization of gendo-symmetric algebras, which is used in the proof of the main theorem. 
\begin{lemma}
\label{anotherchara}
Let $A$ be a finite dimensional algebra. Then $A$ is a gendo-symmetric algebra iff $D(A) \otimes_A D(A) \cong D(A)$ as $A$-bimodules. Assume $eA$ is the minimal faithful projective-injective module. In case $A$ is gendo-symmetric, $D(A) \cong Ae \otimes_{eAe} eA$ as $A$-bimodules.
\end{lemma}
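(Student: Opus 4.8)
The plan is to prove the supplementary isomorphism $D(A)\cong Ae\otimes_{eAe}eA$ first, since it immediately yields the ``only if'' direction, and to treat the converse separately. Throughout write $B=eAe$, which is symmetric because $A$ is gendo-symmetric, and $P=Ae$, which is a generator of $\mathrm{mod}\,B$ with $A\cong\mathrm{End}_B(P)$ by the double centralizer property attached to $\mathrm{domdim}(A)\ge 2$. The defining bimodule isomorphism $D(Ae)\cong eA$ of $(eAe,A)$-bimodules lets me replace $eA$ by $D(P)$, so that $Ae\otimes_{eAe}eA\cong P\otimes_B D(P)$ as $A$-bimodules. I would then write down the natural $A$-bimodule homomorphism $\Psi\colon P\otimes_B D(P)\to D(A)$ given by $\Psi(p\otimes\varphi)(f)=\varphi(f(p))$ for $f\in A=\mathrm{End}_B(P)$, and check that it is well defined over $B$ and compatible with both actions (a direct computation using that multiplication in $A$ is composition of endomorphisms).

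To see that $\Psi$ is an isomorphism I would argue by dimension plus surjectivity. Tensor-hom duality gives $D(P\otimes_B D(P))\cong\mathrm{End}_B(D(P))$, and since $D(P)\cong eA$ is the left $eAe$-module realizing the double centralizer, $\mathrm{End}_B(D(P))$ has dimension $\dim_K A$; hence $\dim_K(P\otimes_B D(P))=\dim_K A=\dim_K D(A)$. It then suffices to show $\Psi$ is surjective, which I would obtain by identifying the dual map $D\Psi\colon A\to D(P\otimes_B D(P))\cong\mathrm{End}_B(D(P))$ with the bijective double centralizer map; equality of dimensions then forces $\Psi$ to be an isomorphism. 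This establishes $D(A)\cong Ae\otimes_{eAe}eA$. The ``only if'' direction now follows by a formal tensor computation: substituting this isomorphism into both factors and using $eA\otimes_A Ae\cong eAe$ (multiplication), one gets
\[
D(A)\otimes_A D(A)\cong Ae\otimes_{eAe}(eA\otimes_A Ae)\otimes_{eAe}eA\cong Ae\otimes_{eAe}eAe\otimes_{eAe}eA\cong Ae\otimes_{eAe}eA\cong D(A)
\]
as $A$-bimodules.

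The genuinely hard part is the converse, where from an abstract bimodule isomorphism $D(A)\otimes_A D(A)\cong D(A)$ I must recover both $\mathrm{domdim}(A)\ge 2$ and the bimodule symmetry $D(Ae)\cong eA$. My plan is to use the canonical comultiplication $\Delta\colon D(A)\to D(A)\otimes_A D(A)$ dual to the multiplication of $A$: I would show that the hypothesis forces $\Delta$ to be an isomorphism (once $\Delta$ is seen to be injective, the matching dimensions do the rest), and then invoke the characterization of gendo-symmetric algebras by the invertibility of this canonical comultiplication from \cite{FanKoe2}. The self-contained alternative is to first extract the dominant dimension directly: applying $\nu^{-1}$ to the given isomorphism and using $\nu^{-1}\nu(A)\cong A$ produces a canonical bimodule map $D(A)\to A$ arising from the unit of the adjunction $(\nu,\nu^{-1})$, which I would use to embed $A$ into a projective-injective module, to identify the minimal faithful summand $eA$, and then to verify $\mathrm{domdim}(A)\ge 2$ together with the required bimodule isomorphism. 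Controlling this canonical map---showing it is injective with projective-injective image---is where I expect the main technical difficulty to lie.
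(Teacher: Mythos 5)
Your first half is essentially correct, and it actually reconstructs an argument the paper never gives: the paper's ``proof'' of this lemma is only a citation of \cite{FanKoe2}, Theorem 3.2, and of the construction in \cite{FanKoe}. Your map $\Psi(p\otimes\varphi)(f)=\varphi(f(p))$ is a well-defined $A$-bimodule map; its $K$-dual identifies, via the adjunction isomorphism of Lemma \ref{dualiso}, with the canonical double centralizer map $A\to End_{eAe}(eA)$ (sending $a$ to right multiplication by $a$), which is bijective by M\"uller's theorem since $domdim(A)\geq 2$; together with your dimension count this proves $D(A)\cong Ae\otimes_{eAe}eA$, and the formal computation using $eA\otimes_A Ae\cong eAe$ then yields the direction ``gendo-symmetric $\Rightarrow D(A)\otimes_A D(A)\cong D(A)$''.

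The converse, however, contains a genuine gap, and your primary plan cannot be repaired. There is no counital ``canonical comultiplication $\Delta\colon D(A)\to D(A)\otimes_A D(A)$ dual to the multiplication of $A$'' for a general algebra: dualizing $A\otimes_A A\to A$ lands in $D(A\otimes_A A)\cong D(A)$, not in $D(A)\otimes_A D(A)$. The honest candidate is to dualize $A\otimes_K A\to A$ and compose with the projection $D(A)\otimes_K D(A)\to D(A)\otimes_A D(A)$; this is indeed an $A$-bimodule map, but it fails to be injective even when the hypothesis holds. For the symmetric algebra $A=K[x]/(x^2)$, identifying $D(A)\cong A$ by the symmetrizing form, this $\Delta$ becomes multiplication by $2x$, which is never injective (and is zero in characteristic $2$). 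So the step ``the hypothesis forces $\Delta$ to be an isomorphism'' is false for every natural choice of $\Delta$. The comultiplication of \cite{FanKoe} is defined only for gendo-symmetric algebras, using precisely the isomorphism $D(A)\cong Ae\otimes_{eAe}eA$ you would need to construct it; and the ``characterization by invertibility of the canonical comultiplication'' in \cite{FanKoe2} is the very statement being proved, so invoking it is circular.

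Your fallback plan stops exactly where the real difficulty starts. Dualizing the hypothesis with Lemma \ref{dualiso} gives $Hom_A(D(A),A)\cong A$ as $A$-bimodules, and already the right-module version of this yields, by Proposition \ref{faithfulhom}.2 (or by the argument of Lemma \ref{domdim1}), that $A$ is a Morita algebra; the idempotent $e$, the inequality $domdim(A)\geq 2$, the embedding of $A$ into a projective-injective module, and the isomorphism $D(Ae)\cong eA$ \emph{as right $A$-modules} all come for free from this. What your proposal never addresses is the actual crux: upgrading that right-module isomorphism to an isomorphism of $(eAe,A)$-bimodules, equivalently showing that $eAe$ is symmetric rather than merely self-injective. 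This step genuinely requires exploiting the bimodule structure of the given isomorphism (for instance after truncating by $e$ on both sides), and it cannot be bypassed: a Morita algebra built on a non-symmetric self-injective algebra satisfies every conclusion your sketch produces, yet is not gendo-symmetric. As it stands, the ``if'' direction of the lemma is not proven.
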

\begin{proof}
See \cite{FanKoe2} Theorem 3.2. and \cite{FanKoe} in the construction of the comultiplication following Definition 2.3.
\end{proof}
\begin{lemma}
An $A$-module $P$ is projective iff there are elements $p_1,p_2,...,p_n \in P$ and elements $\pi_1,\pi_2,...,\pi_n \in Hom_A(P,A)$ such that the following condition holds: \newline
$x=\sum\limits_{i=1}^{n}{ p_i \pi_i(x)}$ for every $x \in P$. \newline
We then call the $p_1,...,p_n$ a \textit{probasis} and $\pi_1,...,\pi_n$ a \textit{dual probasis} of $P$.
\end{lemma}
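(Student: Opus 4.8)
The plan is to reduce both directions to the standard fact that a finitely generated module is projective if and only if it is a direct summand of a finitely generated free module. Since all modules here are finite dimensional, $P$ is finitely generated, so any dual-basis data will automatically be finite and, conversely, projectivity will yield a \emph{finite} free module. Concretely, I would set up an explicit splitting between $P$ and $A^n$ and read the two conditions off of it.

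For the ``if'' direction, assume we are given $p_1,\dots,p_n \in P$ and $\pi_1,\dots,\pi_n \in \operatorname{Hom}_A(P,A)$ with $x = \sum_{i=1}^n p_i \pi_i(x)$ for all $x \in P$. I would define right $A$-module homomorphisms $\psi : P \to A^n$ by $\psi(x) = (\pi_1(x),\dots,\pi_n(x))$ and $\varphi : A^n \to P$ by $\varphi(a_1,\dots,a_n) = \sum_{i=1}^n p_i a_i$. Both maps are $A$-linear, since the $\pi_i$ are and since the right action of $A$ on $P$ distributes over the sum. The hypothesis says precisely that $\varphi \circ \psi = \operatorname{id}_P$, so $\psi$ is a split monomorphism and $P$ is a direct summand of the free module $A^n$; hence $P$ is projective.

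For the ``only if'' direction, suppose $P$ is projective. As $P$ is finitely generated, I would choose a surjection $\varphi : A^n \twoheadrightarrow P$ from a finitely generated free module; projectivity splits this surjection, giving a section $\psi : P \to A^n$ with $\varphi \circ \psi = \operatorname{id}_P$. Writing $\psi(x) = (\pi_1(x),\dots,\pi_n(x))$ defines maps $\pi_i : P \to A$ that are $A$-linear, being the components of the $A$-linear map $\psi$, and hence lie in $\operatorname{Hom}_A(P,A)$. Setting $p_i := \varphi(e_i)$ for the standard basis $e_1,\dots,e_n$ of $A^n$, and evaluating $x = \varphi(\psi(x)) = \varphi\bigl(\sum_i e_i \pi_i(x)\bigr) = \sum_i \varphi(e_i)\pi_i(x) = \sum_i p_i \pi_i(x)$, yields the required identity.

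The argument is essentially routine; the only points demanding care are bookkeeping ones. I must keep the right-module conventions straight, so that the scalars $\pi_i(x) \in A$ act on the $p_i$ from the correct side and so that $\varphi$ and $\psi$ are genuinely right $A$-linear, and I must invoke finite generation (guaranteed by finite dimensionality) to ensure that the free module $A^n$, and hence the dual basis, is finite rather than merely a possibly infinite free module.
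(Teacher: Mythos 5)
Your proof is correct and takes the standard route: the paper itself gives no argument, simply citing Rotman's Proposition 3.10 (the dual basis lemma), whose proof is precisely this splitting between $P$ and $A^n$ that you construct explicitly. Your attention to finite generation (via finite dimensionality, ensuring the probasis is finite) and to the right-module conventions matches what the cited reference requires.
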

\begin{proof}
See \cite{Rot} Propostion 3.10. 
\end{proof}
\begin{example}
Let $P=eA$, for an idempotent $e$. Then a probasis is given by $p_1=e$ and the dual probasis is given by $\pi_1=l_e \in Hom_A(eA,A)$, which is left multiplication by $e$. $l_e$ can be identified with $e$ under the $(A,eAe)$-bimodule isomorphism $Ae \cong Hom_A(eA,A)$.
\end{example}
\begin{proposition}
\label{faithfulhom}
1. $Hom_A(D(A),A)$ is a faithful right $A$-module iff there is an idempotent $e$, such that $eA$ and $Ae$ are faithful and injective. \newline
2. Let $A$ be an algebra with $Hom_A(D(A),A) \cong A$ as right $A$-modules, then $A$ is a Morita algebra.
\end{proposition}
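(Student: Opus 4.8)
The plan is to translate the whole statement into the language of the inverse Nakayama functor. By the functorial description recalled in the preliminaries, $Hom_A(D(A),A)$ is, as a right $A$-module, exactly $\nu^{-1}(A)$, so I would first record the identification $Hom_A(D(A),A)\cong\nu^{-1}(A)$ and argue with the latter. Since $\nu^{-1}$ is additive and restricts to the equivalence $inj\to proj$ inverse to $\nu$ (with $\nu(P(i))=I(S_i)$ and $\nu^{-1}(I(S_i))=P(i)$), writing $A_A=\bigoplus_i P(i)$ gives $\nu^{-1}(A)=\bigoplus_i\nu^{-1}(P(i))$, and each projective-injective summand $P(i)=I(S_j)$ contributes the projective $\nu^{-1}(P(i))=P(j)$. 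Thus, for $e$ the idempotent with $eA$ the maximal projective-injective summand of $A_A$, the module $\nu^{-1}(eA)$ is a projective direct summand of $\nu^{-1}(A)$, and faithfulness of $Hom_A(D(A),A)$ can be approached through this projective part.

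For part~1 I would use two standard reformulations: a module $X$ is faithful iff $A$ embeds into a finite power $X^n$, and the duality $D$ interchanges faithful right and faithful left modules; moreover $Ae$ is injective as a left module iff $eD(A)\cong D(Ae)$ is projective as a right module. For the implication ($\Leftarrow$), assuming $eA$ and $Ae$ are faithful and injective, injectivity of $Ae$ turns $eD(A)\cong D(Ae)$ into a projective-injective right summand of $D(A)$; combining this with faithfulness of $eA$ (so that enough homomorphisms $D(A)\to A$ survive to separate the left action on $D(A)$) I would show that $\nu^{-1}(A)$ is already faithful. For ($\Rightarrow$), from faithfulness of $\nu^{-1}(A)$ I would read off, via the equivalence $\nu\colon proj\to inj$, that the injective hulls needed to embed $A$ all occur among the projective-injective modules, cut them out by an idempotent $e$, and then dualize to conclude that the \emph{same} $e$ makes $Ae$ faithful and injective on the left.

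For part~2 the hypothesis $\nu^{-1}(A)\cong A$ is in particular faithful, so part~1 supplies an idempotent $e$ with $eA$, $Ae$ faithful and injective; in particular the dominant dimension is at least $1$ and $eA$ is the minimal faithful projective-injective module. To upgrade to dominant dimension at least $2$ and to obtain the Morita condition $D(Ae)\cong eA$, I would feed the isomorphism $\nu^{-1}(A)\cong A$ into the four term exact sequence $0\to\nu^{-1}(A)\to\nu^{-1}(I_0)\to\nu^{-1}(I_1)\to\tau^{-1}(A)\to 0$ attached by Proposition~\ref{nakafunc} to a minimal injective copresentation $0\to A\to I_0\to I_1$ of $A$. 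Here $\nu^{-1}(I_0)$ and $\nu^{-1}(I_1)$ are automatically projective; applying $\nu$ and comparing the resulting sequence with $0\to A\to I_0\to I_1$, the identification $\nu^{-1}(A)\cong A$ should force $I_0,I_1$ to be projective-injective and pin down $\nu(Ae)$, yielding both the bound on the dominant dimension and the bimodule isomorphism that characterizes Morita algebras.

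The hard part will be the bookkeeping in part~1. The right $A$-module structure of $Hom_A(D(A),A)$ is induced by the \emph{left} multiplication on $D(A)$, which does not respect the right-module decomposition $D(A)=eD(A)\oplus(1-e)D(A)$, so one cannot merely split off a faithful summand and be done. The genuine content is to show that faithfulness of $\nu^{-1}(A)$ is governed precisely by the projective-injective modules paired under the Nakayama permutation, and to match this against the two-sided condition on a single idempotent $e$; concretely, that injectivity of $Ae$ is exactly what prevents the socle of $\nu^{-1}(eA)$ from collapsing onto too few simples. Once this correspondence is in place, the domination statement and the Morita isomorphism in part~2 should follow formally from the four term sequence.
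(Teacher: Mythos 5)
Your proposal attempts to reprove a statement that the paper itself does not prove: its ``proof'' is a citation of \cite{KerYam}, Theorems 1 and 3, so the standard your argument must meet is that of a complete, self-contained proof. Half of it can meet that standard. In direction ($\Leftarrow$) of part 1 you have the right ingredients, and the sketch can be completed as follows: since $Ae$ is injective, $eD(A)\cong D(Ae)$ is a \emph{projective}, hence torsionless, right module, so composing an embedding $eD(A)\hookrightarrow A^{n}$ with the right-module map $D(A)\to eD(A)$, $x\mapsto ex$, produces finitely many elements of $Hom_A(D(A),A)$ whose kernels intersect exactly in $(1-e)D(A)$. If $a$ annihilates $Hom_A(D(A),A)$, then $f(aD(A))=0$ for all such $f$, so $eaD(A)=0$, so $ea=0$ (the left module $D(A)$ is faithful); and since the annihilator of a right module is a \emph{two-sided} ideal, $eAa=0$, whence $a=0$ by faithfulness of $eA$. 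Your phrase ``enough homomorphisms survive to separate the left action'' is exactly this missing two-sided-ideal step; as written it is a hope, not an argument, but the hope is justified.

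The genuine gap is direction ($\Rightarrow$) of part 1, and it propagates into part 2. There you only restate the conclusion: ``the injective hulls needed to embed $A$ all occur among the projective-injective modules'' is (essentially) the assertion that $A$ is QF-3, and ``dualize to conclude that the \emph{same} $e$ makes $Ae$ faithful and injective'' is precisely where the content of Kerner--Yamagata's Theorem 1 lies; it is not a formal dualization. A concrete obstruction: let $A$ be the Nakayama algebra with quiver $1\xrightarrow{\alpha}2\xrightarrow{\beta}3$ and $rad^{2}=0$ (the Auslander algebra of the path algebra of $A_2$), with the convention that $e_iA$ has top $S_i$. It has dominant dimension $2$, and faithful projective-injective modules exist on both sides, namely $(e_1+e_2)A$ and $A(e_2+e_3)$; but no single idempotent works, since $Ae_1$ and $e_3A$ are not injective, and correspondingly $Hom_A(D(A),A)\cong \nu^{-1}(e_1A)\oplus\nu^{-1}(e_2A)\oplus\nu^{-1}(e_3A)\cong e_2A\oplus e_3A$ is killed by right multiplication with $e_1$, hence is \emph{not} faithful. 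So faithfulness must be converted into the matching of the two sides by some real mechanism, and your sketch supplies none; an argument that ``cuts out'' the right-hand idempotent and then dualizes, without using faithfulness at that exact point, would apply to this algebra and prove something false. Part 2 inherits this gap (it invokes part 1), and its own key step is also defective: you cannot ``apply $\nu$'' to the four-term sequence of Proposition \ref{nakafunc} and compare with $0\to A\to I_0\to I_1$, because $\nu$ is not exact and $\nu\nu^{-1}(A)\cong\nu(A)\cong D(A)\not\cong A$. The workable version of your plan---essentially what the paper runs in Lemma \ref{domdim1}, which however \emph{cites} the present proposition instead of proving it---stays inside the four-term sequence: one shows $\nu^{-1}(I_0)\in add(eA)$ using that $\nu$ permutes $add(eA)$ (a fact that itself requires the one-idempotent, two-sided hypothesis), compares numbers of indecomposable summands to get $\nu^{-1}(I_0)\cong I_0$, and then bounds socles and injective hulls. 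Finally, a terminological slip worth fixing: Morita algebras are characterized by $D(Ae)\cong eA$ as right $A$-modules only; the bimodule isomorphism you aim for at the end characterizes the strictly smaller class of gendo-symmetric algebras.
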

\begin{proof}
1. See \cite{KerYam}, Theorem 1. \newline
2. See \cite{KerYam}, Theorem 3. 
\end{proof}

\begin{lemma}
\label{dualiso}
Let $Y$ and $Z$ be $A$-bimodules. Then the following is an isomorphism of $A$-bimodules: \newline
$$Hom_A(Y,D(Z)) \cong D(Y \otimes_A Z).$$
\end{lemma}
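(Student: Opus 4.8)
The plan is to write down the obvious candidate map and then check that it respects both the left and right $A$-actions; this is a concrete instance of the duality half of the tensor-hom adjunction, so no deep input is required. Concretely, I would define $\Phi : Hom_A(Y,D(Z)) \to D(Y \otimes_A Z)$ by sending a right $A$-linear map $\phi : Y \to D(Z)$ to the $K$-linear functional $\Phi(\phi)$ determined on generators by $\Phi(\phi)(y \otimes z) = \phi(y)(z)$ for $y \in Y$ and $z \in Z$.

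First I would check that $\Phi$ is well defined, i.e. that the assignment $(y,z) \mapsto \phi(y)(z)$ is $K$-bilinear and $A$-balanced, so that it factors through $Y \otimes_A Z$. Bilinearity is immediate; for the balancing one computes $\phi(ya)(z) = (\phi(y)\,a)(z) = \phi(y)(az)$, where the first equality uses that $\phi$ is right $A$-linear and the second unwinds the right $A$-action on $D(Z)$ (which is induced by the left action on $Z$ via $(f a)(z)=f(az)$). Hence $\phi(ya)(z) = \phi(y)(az)$, which is exactly the relation needed to descend to the tensor product over $A$. The inverse map sends $g \in D(Y \otimes_A Z)$ to the assignment $y \mapsto (z \mapsto g(y \otimes z))$; that this lands in $Hom_A(Y,D(Z))$ is the same balancing computation read backwards, and the two constructions are visibly mutually inverse, so $\Phi$ is at least a $K$-linear isomorphism.

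The remaining, and really the only delicate, step is to verify that $\Phi$ is a homomorphism of $A$-bimodules. Here I would fix conventions once and for all: $D$ interchanges the two sides of a bimodule; $Hom_A(Y,D(Z))$ carries its left action from the left action on $D(Z)$ and its right action from the left action on $Y$; while $Y \otimes_A Z$ is a bimodule via the outer actions of $Y$ and $Z$, and $D(Y \otimes_A Z)$ again swaps them. With these explicit, one checks directly that for $a \in A$ the two identities $\Phi(a\phi) = a\,\Phi(\phi)$ and $\Phi(\phi a) = \Phi(\phi)\,a$ reduce, after evaluation on a generator $y \otimes z$, to $\phi(y)(za)$ and $\phi(ay)(z)$ respectively on both sides. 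The main obstacle is therefore purely bookkeeping: tracking which module structure each action comes from and how $D$ flips left to right, so that the four action computations line up; there is no conceptual difficulty once the conventions are pinned down.
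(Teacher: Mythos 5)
Your proof is correct. Note, though, that the paper itself does not argue this lemma at all: its ``proof'' is a citation to \cite{ASS}, Appendix 4, Proposition 4.11. So there is no in-paper argument to compare with; what you have written is a self-contained version of the standard adjunction-plus-duality isomorphism that the citation points to. Your key computations are all right: the balancing identity $\phi(ya)(z)=(\phi(y)a)(z)=\phi(y)(az)$ is exactly what makes $\Phi(\phi)$ descend to $Y\otimes_A Z$, and with the conventions you fix (the left action on $D(Z)$ induced by the right action on $Z$, the outer actions on $Y\otimes_A Z$, and $D$ swapping sides) the two bimodule checks do reduce on generators to $\Phi(a\phi)(y\otimes z)=\phi(y)(za)=(a\Phi(\phi))(y\otimes z)$ and $\Phi(\phi a)(y\otimes z)=\phi(ay)(z)=(\Phi(\phi)a)(y\otimes z)$. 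The trade-off is the expected one: the citation is shorter and places the statement in a general framework (it is an instance of $Hom_A(Y,Hom_K(Z,K))\cong Hom_K(Y\otimes_A Z,K)$, natural in $Y$ and $Z$), while your argument makes the map and the bookkeeping of the four actions explicit, which is arguably more useful here since the paper later manipulates this isomorphism element-wise (e.g.\ in the proof of Theorem \ref{mainresult}). One optional strengthening: your $\Phi$ is natural in both variables, which costs nothing extra to record and is what makes it harmless that the paper applies the lemma to varying bimodules.
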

\begin{proof}
See \cite{ASS} Appendix 4, Proposition 4.11. 
\end{proof}

\begin{Definition}
Let $A$ be a finite dimensional algebra and $W$ an $A$-bimodule and let $c_r: W \rightarrow A \otimes_A W $ and $c_l: W \rightarrow W \otimes_A A$ be the canonical isomorphisms. Then the tuple $\mathcal{B}:=(A,W)$ is called a \textit{bocs} (see \cite{Kue}) or the module $W$ is called an $A$-coring (see \cite{BreWis}) if there are $A$-bimodule maps $\mu: W \rightarrow W \otimes_A W$ (the comultiplication) and $\epsilon: W \rightarrow A$ (the counit) with the following properties: \newline 
$  (1_W \otimes_A \epsilon)\mu=c_l, (\epsilon \otimes_A 1_W)\mu=c_r$ and $(\mu\otimes_A 1_W)\mu=(1_W \otimes_A \mu)\mu$.
We often say for short that $W$ is a bocs, if $A$ (and $\mu$ and $\epsilon$) are clear from the context.
The category of the finite dimensional bocs modules is defined as follows: \newline
Objects are the finite dimensional right $A$-modules. \newline
Homomorphism spaces are $Hom_\mathcal{B}(M,N):=Hom_A(M,Hom_A(W,N))$ with the following composition $*$ and units: \newline
Let $g: M \rightarrow Hom_A(W,N) \in Hom_\mathcal{B}(M,N)$ and $f: L \rightarrow Hom_A(W,M) \in Hom_\mathcal{B}(L,M)$. Then $g*f:= Hom_A(\mu,N) \psi Hom_A(W,g)f $, where $\psi$ is the adjunction isomorphism $Hom_A(W,Hom_A(W,N)) \rightarrow Hom_A(W \otimes_A W,N)$. The units $1_M \in Hom_\mathcal{B}(M,M)$ are defined as follows: $1_M := Hom_A(\epsilon,M) \xi$, where $\xi: M \rightarrow Hom_A(A,M)$ is the canonical isomorphism.
Note that the module category of a bocs is $K$-linear. We refer to \cite{Kue} for other equivalent descriptions of the bocs module category and more information.
\end{Definition}
\begin{examples}
\label{probasis} 
1. $(A,A)$ is always a bocs with the obvious multiplication and comultiplication. The next natural bimodule to look for a bocs-structure is $D(A)$. We will see that $(A,D(A))$ is not a bocs for arbitrary finite dimensional algebras. \newline
2. The next example can be found in 17.6. in \cite{BreWis}, to which we refer for more details. Let $P$ be a $(B,A)$-bimodule for two finite dimensional algebras $B$ and $A$ such that $P$ is projective as a right $A$-module and let $P^{*}:=Hom(P,A)$, which is then a $(A,B)$ bimodule. Let $p_1,p_2,...,p_n$ be a probasis for $P$ and $\pi_1,\pi_2,...,\pi_n$ a dual probasis of the projective $A$-module $P$.
Denote the $A$-bimodule $P^{*} \otimes_B P$ by $W$ and define the comultiplication $\mu: W \rightarrow W \otimes_A W$ as follows: Let $f \in P^{*}$ and $p \in P$, then $\mu(f \otimes p)=\sum\limits_{i=1}^{n}{(f \otimes p_i) \otimes (\pi_i \otimes p)}$. Define the counit $\epsilon : W \rightarrow A$ as follows: $\epsilon( f \otimes p)=f(p)$. Now specialise to $P=eA$, for an idempotent $e$ and identify $Hom_A(eA,A)=Ae$. Then $\mu(ae \otimes eb) = (ae \otimes e) \otimes (e \otimes eb)$ and $\epsilon(ae \otimes eb)=aeb$. We will use this special case in the next section to show that $(A,D(A))$ is always a bocs for a gendo-symmetric algebra. \newline
3. Let $(A_1,W_1)$ and $(A_2,W_2)$ be bocses, then $(A_1 \otimes_K A_2 , W_1 \otimes_K W_2)$ is again a bocs. See \cite{BreWis} 24.1. for a proof.
\end{examples}
\section{Characterization of gendo-symmetric algebras}
\setcounter{subsection}{1}
\setcounter{Definition}{0}
The following lemma, will be important for proving the main theorem.
\begin{lemma}
\label{domdim1}
Assume that $Hom_A(D(A),A) \cong A \oplus X$ as right $A$-modules for some right $A$-module $X$, then $domdim(A) \geq 2$ and $X=0$.
\end{lemma}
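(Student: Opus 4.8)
The plan is to reduce the whole statement to the single claim that $X=0$. Indeed, once $X=0$ we have $Hom_A(D(A),A)\cong A$ as right modules, so Proposition \ref{faithfulhom}(2) shows that $A$ is a Morita algebra, and by definition Morita algebras have dominant dimension at least $2$; this yields both conclusions simultaneously. Before attacking $X=0$ I would extract the easy consequences of the hypothesis. Since $A$ is a direct summand of $Hom_A(D(A),A)$ and $A$ is faithful, the module $Hom_A(D(A),A)$ is faithful, so Proposition \ref{faithfulhom}(1) produces an idempotent $e$ with $eA$ and $Ae$ faithful and injective. In particular $eA$ is a faithful projective-injective module, so $domdim(A)\ge 1$ and the injective envelope $I_0$ of $A$ is projective-injective.

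Next I would record two facts about the complement $X$. Writing $Hom_A(D(A),A)=\nu^{-1}(A)$ and choosing a minimal injective copresentation $0\to A\to I_0\to I_1$ with $I_0$ projective-injective, Proposition \ref{nakafunc} gives an exact sequence $0\to\nu^{-1}(A)\to\nu^{-1}(I_0)\to\nu^{-1}(I_1)\to\tau^{-1}(A)\to 0$ in which $\nu^{-1}(I_0)=P_0$ is projective. Hence $\nu^{-1}(A)=A\oplus X$ embeds into the projective module $P_0$, so $X$ is torsionless; as a nonzero torsionless module always admits a nonzero homomorphism to $A$, it suffices to prove $Hom_A(X,A)=0$. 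Splitting off the summand $A$, this is equivalent to $Hom_A(\nu^{-1}(A),A)\cong A$, that is, to the biduality module $Hom_A(Hom_A(D(A),A),A)$ being isomorphic to $A$. It is also convenient to rewrite $\nu^{-1}(A)\cong D(D(A)\otimes_A D(A))$ using Lemma \ref{dualiso} together with $A\cong D(D(A))$, so that the hypothesis becomes the statement that $D(A)$ is a direct summand of $\nu(D(A))=D(A)\otimes_A D(A)$.

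The crux, and the step I expect to be the main obstacle, is proving $Hom_A(X,A)=0$, equivalently $\nu(X)=0$. My approach is to transport the splitting through the adjunction $\nu\dashv\nu^{-1}$: the inclusion $\iota\colon A\to\nu^{-1}(A)$ equals $\nu^{-1}(\phi)$ for a map $\phi\colon D(A)\to A$, and $\phi$ is precisely the restriction of the counit $\varepsilon_A\colon\nu\nu^{-1}(A)\to A$ to the summand $\nu(A)=D(A)$. Feeding the injective envelope $A\hookrightarrow I_0$ into the naturality square of $\varepsilon$, together with the fact that $\varepsilon_{I_0}$ is an isomorphism since $I_0$ is injective, should control $\varepsilon_A$ well enough to force $\nu(X)=0$; combined with torsionlessness this gives $X=0$. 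The genuine difficulty is ruling out a nonzero complement $X$ at all: the example $A=K[x,y]/(x,y)^2$, for which one computes that $\nu^{-1}(A)=Hom_A(D(A),A)$ is semisimple of dimension $4>3=\dim_K A$, shows that in general $\nu^{-1}(A)$ can be strictly larger than $A$, so no dimension count can suffice and the argument must use essentially that the summand which splits off is the entire regular module $A$, a projective generator, rather than merely a module of the right dimension.
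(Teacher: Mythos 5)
Your setup and your endgame are exactly the paper's: faithfulness of $Hom_A(D(A),A)$ plus Proposition \ref{faithfulhom}(1) produces the idempotent $e$, Proposition \ref{nakafunc} yields the embedding $A \oplus X \hookrightarrow \nu^{-1}(I_0)$, and once $X=0$ is known, Proposition \ref{faithfulhom}(2) makes $A$ a Morita algebra, hence $domdim(A)\geq 2$. Your remark that no dimension count can work (the example $K[x,y]/(x,y)^2$) is also correct. But the crucial step, $X=0$, is precisely the part you leave as a hope, and the proposed mechanism is circular. Unwind it: the counit is evaluation, $\varepsilon_M(f\otimes\alpha)=f(\alpha)$, so for $u\colon A\to I_0$ the naturality square reads $u\circ\varepsilon_A=\varepsilon_{I_0}\circ\nu\nu^{-1}(u)$, and on the summand $\nu(X)=X\otimes_A D(A)$ the right-hand side sends $x\otimes\alpha$ to $u(x(\alpha))$. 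Since $u$ is injective, the assertion that this vanishes, i.e.\ that $\nu(X)\subseteq\ker(\varepsilon_A)$, says verbatim that $x(\alpha)=0$ for all $x\in X\subseteq Hom_A(D(A),A)$ and all $\alpha\in D(A)$, which is the statement $X=0$ itself. So the square (even with $\varepsilon_{I_0}$ an isomorphism) only restates the goal and supplies no constraint; likewise your reduction via torsionless modules is valid but contentless, since $Hom_A(X,A)=0$ is equivalent to $X=0$ as soon as $X$ embeds in a projective.

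What is missing is the one non-formal input the paper actually uses, and which your proposal never touches: the minimality of $e$ together with the fact that \emph{both} $eA$ and $Ae$ are faithful and injective (this two-sided condition is what Theorem 1 of \cite{KerYam} provides). From it one gets $add(eA)=add(D(Ae))$, hence $\nu^{-1}(I_0)$ lies in $add(eA)$ --- in particular $\nu^{-1}(I_0)$ is \emph{injective}, not merely projective --- and, since $\nu^{-1}\colon inj\rightarrow proj$ is an equivalence, $\nu^{-1}(I_0)$ has the same number of indecomposable direct summands as $I_0$. As $I_0$ is already the injective hull of $A$, the injective hull of $A\oplus X$, being a direct summand of the injective module $\nu^{-1}(I_0)$ and containing $I_0$ plus the injective hull of $X$, would exceed that count unless $soc(X)=0$; hence $X=0$. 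Note that by weakening ``$X$ embeds in $\nu^{-1}(I_0)\in add(eA)$'' to ``$X$ is torsionless'' you discard exactly the injectivity of $\nu^{-1}(I_0)$ that drives this counting argument. So the proposal has a genuine gap at its central step; the parts that are correct are the easy parts, shared with the paper.
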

\begin{proof}
By assumption $Hom_A(D(A),A)$ is faithful and so there is an idempotent $e$ with $eA$ and $Ae$ faithful and injective by \hyperref[faithfulhom]{ \ref*{faithfulhom}} 1., which implies that $A$ has dominant dimension at least 1. Choose $e$ minimal such that those properties hold. Now look at the minimal injective presentation $0 \rightarrow A \rightarrow I_0 \rightarrow I_1$ of $A$ and note that $I_0 \in add(eA)$. Using \hyperref[nakafunc]{ \ref*{nakafunc}}, there is the following exact sequence:
$0 \rightarrow \nu^{-1}(A) \rightarrow \nu^{-1}(I_0) \rightarrow \nu^{-1}(I_1) \rightarrow \tau^{-1}(A) \rightarrow 0$. But $\nu^{-1}(A) \cong Hom_A(D(A),A) \cong A \oplus X$ and so there is the embedding:
$0 \rightarrow A \oplus X \rightarrow \nu^{-1}(I_0)$. Note that $\nu^{-1}(I_0) \in add(eA)$ is the injective hull of $A \oplus X$, since $\nu^{-1}: inj \rightarrow proj$ is an equivalence and $eA$ is the minimal faithful projective injective module. Thus $\nu^{-1}(I_0)$ has the same number of indecomposable direct summands as $I_0$. Therefore $soc(X)=0$ and so $X=0$, since every indecomposable summand of the socle of the module provides an indecomposable direct summand of the injective hull of that module.
Thus $Hom_A(D(A),A) \cong A$ and $A$ is a Morita algebra by  \hyperref[faithfulhom]{ \ref*{faithfulhom}} 2. and so $A$ has dominant dimension at least 2.
\end{proof}

We now give a bocs-theoretic characterization of gendo-symmetric algebras.
\begin{theorem}
\label{mainresult}
Let $A$ be a finite dimensional algebra.
Then the following are equivalent: \newline
1. $A$ is gendo-symmetric. \newline
2. There is a comultiplication and counit such that $\mathcal{B}=(A,D(A))$ is a bocs.
\end{theorem}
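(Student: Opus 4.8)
The plan is to prove the two implications separately; $(1)\Rightarrow(2)$ is an explicit construction, while $(2)\Rightarrow(1)$ carries the real content.

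For $(1)\Rightarrow(2)$ I would specialise the coring construction of Example~\ref{probasis}. Assume $A$ is gendo-symmetric with minimal faithful projective-injective module $eA$. Then $eA$ is an $(eAe,A)$-bimodule, projective as a right $A$-module, with probasis $p_1=e$ and dual probasis $\pi_1=l_e$; taking $B=eAe$ and $P=eA$ there yields a bocs structure on $W=Ae\otimes_{eAe}eA$ with $\mu(ae\otimes eb)=(ae\otimes e)\otimes(e\otimes eb)$ and $\epsilon(ae\otimes eb)=aeb$. By Lemma~\ref{anotherchara} there is an $A$-bimodule isomorphism $g\colon Ae\otimes_{eAe}eA\to D(A)$, and transporting the structure maps along $g$ (replacing $\mu,\epsilon$ by $(g\otimes_A g)\mu g^{-1}$ and $\epsilon g^{-1}$) gives a bocs structure on $(A,D(A))$, all axioms being preserved by a bimodule isomorphism.

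For $(2)\Rightarrow(1)$, suppose $\mu\colon D(A)\to D(A)\otimes_A D(A)$ and $\epsilon\colon D(A)\to A$ are the structure maps. Since $\epsilon$ is right $A$-linear it lies in $Hom_A(D(A),A)=\nu^{-1}(A)$, and I would use it to define a map $\phi\colon A\to\nu^{-1}(A)$ by $\phi(a)=a\epsilon$, i.e. $(a\epsilon)(w)=a\,\epsilon(w)$, whose bimodule-linearity reflects that of $\epsilon$. The crucial step is to produce a retraction. Applying $D$ to $\mu$ and using Lemma~\ref{dualiso} (which gives a bimodule isomorphism $Hom_A(D(A),A)\cong D(D(A)\otimes_A D(A))$) together with $D(D(A))\cong A$, I obtain a bimodule map $\psi:=D(\mu)\colon\nu^{-1}(A)\to A$, characterised by $w(\psi(f))=\sum w''(f(w'))$ for all $w\in D(A)$, where $\mu(w)=\sum w'\otimes w''$. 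I then claim $\psi\phi=\mathrm{id}_A$: one computes $w(\psi\phi(a))=\sum w''(a\,\epsilon(w'))$, while the counit axiom $(\epsilon\otimes_A 1)\mu=c_r$, rewritten through the left action $(b\cdot w'')(x)=w''(xb)$ on $D(A)$, says exactly $w(x)=\sum w''(x\,\epsilon(w'))$ for all $x\in A$; setting $x=a$ matches the two expressions.

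Hence $\phi$ is a split monomorphism and $Hom_A(D(A),A)\cong A\oplus\ker\psi$ as right modules, so Lemma~\ref{domdim1} forces $\ker\psi=0$ (and $domdim(A)\ge 2$). Thus $\phi$ is a bimodule isomorphism $A\cong Hom_A(D(A),A)$, and feeding this back through Lemma~\ref{dualiso} gives $D(D(A)\otimes_A D(A))\cong A$, that is $D(A)\otimes_A D(A)\cong D(A)$ as bimodules; by Lemma~\ref{anotherchara} this is gendo-symmetry. The main obstacle is the identity $\psi\phi=\mathrm{id}$: it is tempting to argue only that the counit splits $\mu$, but this merely makes $D(A)$ a direct summand of $D(A)\otimes_A D(A)$, and a dimension count (e.g. $\dim D(A)\otimes_A D(A)=1\ne 3$ for the hereditary algebra $kA_2$) shows the two need not be isomorphic in general. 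What forces equality is the interplay of the self-duality in Lemma~\ref{dualiso} with Lemma~\ref{domdim1}; notably coassociativity of $\mu$ is not needed for this direction.
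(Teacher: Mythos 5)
Your proposal is correct and takes essentially the same route as the paper: for $(1)\Rightarrow(2)$ both transport the coring structure of Example \ref{probasis}.2 along the isomorphism of Lemma \ref{anotherchara}, and for $(2)\Rightarrow(1)$ both dualize $\mu$, identify $D(D(A)\otimes_A D(A))\cong Hom_A(D(A),A)$ via Lemma \ref{dualiso}, arrive at a decomposition $Hom_A(D(A),A)\cong A\oplus X$, and finish with Lemma \ref{domdim1} and Lemma \ref{anotherchara}. The only (cosmetic) difference is how the splitting is obtained: you build the section $a\mapsto a\epsilon$ of $D(\mu)$ explicitly and verify $\psi\phi=\mathrm{id}_A$ from the counit axiom, whereas the paper notes that the counit axiom makes $\mu$ injective, so $D(\mu)$ is a surjection onto the projective module $A$ and splits automatically.
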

\begin{proof}
We first show that 1. implies 2.: \newline
Assume that $A$ is gendo-symmetric with minimal faithful projective-injective module $eA$. Set $P:=eA$ and apply the second example in \hyperref[probasis]{ \ref*{probasis}}, with $B:=eAe$, to see that $\mathcal{B}:=(A,Ae \otimes_{eAe} eA)$ has the structure of a bocs. Now note that by \hyperref[anotherchara]{ \ref*{anotherchara}} $D(A) \cong Ae \otimes_{eAe} eA$ as $A$-bimodules and one can use this to get a bocs structure for $(A,D(A))$. \newline
Now we show that 2. implies 1.: \newline
Assume that $(A,D(A))$ is a bocs with comultiplication $\mu$ and counit $\epsilon$.
Note first that the comultiplication $\mu$ always has to be injective because in the identity $(\epsilon \otimes_A 1_W)\mu=c_r$ appearing the definition of a bocs, $c_r$ is an isomorphism. So there is a injection $\mu: D(A) \rightarrow D(A) \otimes_A D(A)$ which gives a surjection $D(\mu): D(D(A) \otimes_A D(A)) \rightarrow A$. Now using \hyperref[dualiso]{ \ref*{dualiso}} we see that $D(D(A) \otimes_A D(A)) \cong Hom_A(D(A),A)$ as $A$-bimodules. \newline 
Since $A$ is projective, $D(\mu)$ is split and $Hom_A(D(A),A) \cong A \oplus X$ for some $A$-right module $X$. By \hyperref[domdim1]{ \ref*{domdim1}}, this implies $Hom_A(D(A),A) \cong A$ and comparing dimensions, $D(\mu)$ and thus also $\mu$ have to be isomorphisms.
By \hyperref[anotherchara]{ \ref*{anotherchara}}, $A$ is gendo-symmetric.

\end{proof}

\begin{Corollary}
Let $A$ be a finite dimensional algebra. Then the following two conditions are equivalent: \newline
1. $A$ is gendo-symmetric. \newline
2. $\nu$ is a comonad.
\end{Corollary}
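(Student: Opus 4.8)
The plan is to deduce this corollary directly from the main theorem (Theorem \ref{mainresult}) by identifying comonad structures on the Nakayama functor with bocs-structures on $(A,D(A))$. Recall from the preliminaries that $\nu \cong (-) \otimes_A D(A)$, so it suffices to establish the general principle that, for any $A$-bimodule $W$, comonad structures on the endofunctor $\nu_W := (-) \otimes_A W$ of mod-$A$ correspond bijectively to coring (that is, bocs) structures on $W$. Specializing to $W = D(A)$ and invoking Theorem \ref{mainresult} then yields the equivalence of the two conditions.

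First I would set up the dictionary between $A$-bimodules and endofunctors of mod-$A$. For bimodules $V,W$ one has the natural identification $\nu_W \circ \nu_V = (-) \otimes_A (V \otimes_A W) = \nu_{V \otimes_A W}$, and by an Eilenberg--Watts/Yoneda argument (a natural transformation of such functors is determined by its value at the free module $A_A$, which is forced to be a bimodule map) one gets $\mathrm{Nat}(\nu_W, \nu_V) \cong Hom_{A^e}(W,V)$. In particular $\nu_W^2 = \nu_{W \otimes_A W}$ and $\mathrm{id} = \nu_A$, so under this correspondence a comultiplication $\delta : \nu_W \to \nu_W^2$ is the same datum as a bimodule map $\mu : W \to W \otimes_A W$, and a counit $\varepsilon : \nu_W \to \mathrm{id}$ is the same datum as a bimodule map $\epsilon : W \to A$.

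Next I would translate the comonad axioms into the bocs axioms through this dictionary. The two counit laws for the comonad become, after applying the correspondence, the equalities $(1_W \otimes_A \epsilon)\mu = c_l$ and $(\epsilon \otimes_A 1_W)\mu = c_r$; similarly the coassociativity law $(\delta \nu_W)\delta = (\nu_W \delta)\delta$, viewed as a pair of transformations $\nu_W \to \nu_W^3 = \nu_{W \otimes_A W \otimes_A W}$, becomes $(\mu \otimes_A 1_W)\mu = (1_W \otimes_A \mu)\mu$. These are precisely the three conditions in the definition of a bocs. Hence a comonad structure on $\nu$ is exactly a bocs-structure on $(A,D(A))$, and by Theorem \ref{mainresult} such a structure exists if and only if $A$ is gendo-symmetric.

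The main obstacle is entirely bookkeeping rather than conceptual: one must track the order reversal in $\nu_W \circ \nu_V = \nu_{V \otimes_A W}$ and check that whiskering $\delta$ or $\varepsilon$ on the left versus the right lands on the correct tensor factor, so that the two coassociativity and the two counit diagrams of the comonad match the corresponding coring conditions on the nose, including the associativity and unit constraints of $\otimes_A$. Once the variance is handled consistently, each comonad axiom corresponds term by term to a bocs axiom and no further computation is required.
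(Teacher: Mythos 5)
Your proposal is correct and follows essentially the same route as the paper: both reduce the statement to Theorem \ref{mainresult} via the identification of comonad structures on $(-)\otimes_A D(A) \cong \nu$ with bocs (coring) structures on the bimodule $D(A)$. The only difference is that the paper simply cites \cite{BreWis}, 18.28 for this identification, whereas you sketch its proof via the Eilenberg--Watts correspondence $\mathrm{Nat}(\nu_W,\nu_V)\cong Hom_{A^e}(W,V)$, which is a valid (and standard) argument.
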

\begin{proof}
In \cite{BreWis} 18.28. it is proven that an $A$-bimodule $W$ is a bocs iff the functor $(-) \otimes_A W$ is a comonad. Appyling this with $W=D(A)$ and using the previous theorem, the corollary follows. 
\end{proof}
\begin{remark}
Theorem \hyperref[mainresult]{ \ref*{mainresult}} also shows that the comultiplication of the bocs $(A,D(A))$ is always an $A$-bimodule isomorphism for a gendo-symmetric algebra $A$. In \cite{FanKoe}, section 2.2., it is noted that such an isomorphism is unique up to multiples of invertible central elements in $A$. Thus the comultiplication of the bocs is also unique in that sense. 
\end{remark}

The following proposition gives an application:
\begin{proposition}
Let $A$ and $B$ be gendo-symmetric $K$-algebras. Then $A \otimes_K B$ is again a gendo-symmetric $K$-algebra. 
In particular, let $F$ be a field extension of $K$ and $A$ a gendo-symmetric $K$-algebra. Then $A \otimes_K F$ is again gendo-symmetric.
\end{proposition}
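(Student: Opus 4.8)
The plan is to reduce the statement to the bocs-theoretic characterization in Theorem \ref{mainresult} together with the fact (Example \ref{probasis}.3) that the $K$-tensor product of two bocses is again a bocs. Since $A$ and $B$ are gendo-symmetric, Theorem \ref{mainresult} provides bocs structures on $(A,D(A))$ and $(B,D(B))$. By Example \ref{probasis}.3 the tuple $(A \otimes_K B, D(A) \otimes_K D(B))$ is then again a bocs. If I can identify $D(A) \otimes_K D(B)$ with $D(A \otimes_K B)$ as $(A \otimes_K B)$-bimodules, transporting the bocs structure along this identification turns $(A \otimes_K B, D(A \otimes_K B))$ into a bocs, and one further application of Theorem \ref{mainresult} yields that $A \otimes_K B$ is gendo-symmetric.

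The one substantial point is the bimodule isomorphism $D(A) \otimes_K D(B) \cong D(A \otimes_K B)$. First I would write down the standard $K$-linear isomorphism for finite dimensional spaces, $\phi \otimes \psi \mapsto (a \otimes b \mapsto \phi(a)\psi(b))$, which is bijective because $A$ and $B$ are finite dimensional. Then I would check that it intertwines the two $(A \otimes_K B)$-bimodule structures: on the left-hand side the actions are componentwise, $(a' \otimes b')(\phi \otimes \psi)(a'' \otimes b'') = (a'\phi a'') \otimes (b'\psi b'')$, while on the right-hand side they are the usual actions on $D(A \otimes_K B)$; a direct evaluation on elementary tensors shows that the two agree. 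This is a routine but necessary verification.

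For the field extension I would first treat a finite extension $F/K$: as a finite dimensional commutative $K$-algebra $F$ is Frobenius, since $F \cong D(F)$ as $F$-modules (both being one-dimensional $F$-vector spaces), and a commutative Frobenius algebra is symmetric because $D(F)$ is a symmetric bimodule; hence $F$ is symmetric and therefore gendo-symmetric, so $A \otimes_K F$ is gendo-symmetric by the tensor product statement just proved. For an arbitrary, possibly infinite, extension $F/K$ the tensor-of-bocses argument no longer applies directly, and I would instead base-change the characterization of Lemma \ref{anotherchara}: applying the exact functor $- \otimes_K F$ to the $A$-bimodule isomorphism $D(A) \otimes_A D(A) \cong D(A)$ and using flatness of $F$ over $K$ together with the canonical identifications $D(A) \otimes_K F \cong D(A \otimes_K F)$ and $(D(A) \otimes_K F) \otimes_{A \otimes_K F} (D(A) \otimes_K F) \cong (D(A) \otimes_A D(A)) \otimes_K F$, I would obtain $D(A \otimes_K F) \otimes_{A \otimes_K F} D(A \otimes_K F) \cong D(A \otimes_K F)$ as $(A \otimes_K F)$-bimodules, whence $A \otimes_K F$ is gendo-symmetric by Lemma \ref{anotherchara} applied over the base field $F$.

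The main obstacle I expect is precisely the compatibility of the bimodule structures under these natural isomorphisms, both the identification $D(A) \otimes_K D(B) \cong D(A \otimes_K B)$ and, in the field-extension case, the several base-change identifications, since the underlying vector space isomorphisms are standard but one must confirm that each is $(A \otimes_K B)$- respectively $(A \otimes_K F)$-bilinear before Theorem \ref{mainresult} or Lemma \ref{anotherchara} can be invoked. Everything else is a formal consequence of the main theorem and of the already established construction of tensor products of bocses.
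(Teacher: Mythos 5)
Your proof of the tensor-product statement is essentially the paper's own argument: both invoke Theorem \ref{mainresult} twice, the tensor product of bocses from Example \ref{probasis} (3.), and the bimodule isomorphism $D(A)\otimes_K D(B)\cong D(A\otimes_K B)$; the paper does not verify this isomorphism either, citing an exercise in \cite{SkoYam}, and your elementwise check is the same routine verification. Where you genuinely depart from the paper is in the field-extension statement. The paper disposes of it in one line: a field is symmetric, hence gendo-symmetric, so the first part applies with $B=F$. That argument tacitly requires $F$ to be a \emph{finite} extension of $K$, since every algebra in the paper's setup, and in particular both factors entering the bocs tensor construction, is finite dimensional over $K$; for infinite $F/K$ the first part literally does not apply. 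Your alternative for arbitrary extensions --- base-changing the characterization $D(A)\otimes_A D(A)\cong D(A)$ of Lemma \ref{anotherchara} along the flat extension $K\to F$, using the canonical identifications $D(A)\otimes_K F\cong D(A\otimes_K F)$ and $(M\otimes_K F)\otimes_{A\otimes_K F}(N\otimes_K F)\cong (M\otimes_A N)\otimes_K F$, and then applying Lemma \ref{anotherchara} over the base field $F$ --- is correct, and it in fact proves the statement as written (with no finiteness hypothesis on $F$), thereby repairing a small gap in the paper's one-line treatment; the cost is only the handful of routine bimodule-compatibility checks you already flag as the main obstacle.
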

\begin{proof}
Let $A$ and $B$ two gendo-symmetric algebras. Then $\mathcal{B}_1=(A,D(A))$ and $\mathcal{B}_2=(B,D(A))$ are bocses. By example 3 of \hyperref[probasis]{ \ref*{probasis}} also the tensor product of $\mathcal{B}_1$ and $\mathcal{B}_2$ are bocses, it is the bocs $\mathcal{C}=(A \otimes_K B, D(A) \otimes_K D(B))$. Recall the well known formula $(D(A) \otimes_K D(B)) \cong D(A \otimes_K B)$, which can be found as exercise 12. of chapter II. in \cite{SkoYam}. Using this isomorphism one can find a bocs structure on $(A \otimes_K B, D(A \otimes_K B))$ using the bocs structure on $\mathcal{C}$. Thus by our bocs-theoretic characterization of gendo-symmetric algebras, also $A \otimes_K B$ is gendo-symmetric.
The second part follows since every field is a symmetric and thus gendo-symmetric algebra. 
\end{proof}

Let $A^{e}:=A^{op} \otimes_K A$ denote the enveloping algebra of a given algebra $A$.
The following proposition can be found in \cite{BreWis}, 17.8.
\begin{Proposition}
Let $(A,W)$ be a bocs and $c \in W$ with $\mu(c)=\sum\limits_{i=1}^{n}{c_{1,i} \otimes c_{2,i}}$. \newline
1. $Hom_A(W,A)$ has a ring structure with unit $\epsilon$ and product $*^{r}$, given as follows
for $f,g \in Hom_A(W,A)$: \newline
$f *^{r} g = g(f \otimes_A id_W) \mu$. \newline There is a ring anti-morphism $\zeta: A \rightarrow Hom_A(W,A)$, given by $\zeta(a)=\epsilon(a(-))$.  \newline
2. $Hom_{A^{e}}(W,A)$ has a ring structure with unit $\epsilon$ and multiplication $*$ given as follows for $f,g \in Hom_{A^{e}}(W,A)$: \newline
$f*g(c)= \sum\limits_{i=1}^{n}{f(c_{1,i})g(c_{2,i})}$.
\end{Proposition}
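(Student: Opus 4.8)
The plan is to verify the asserted ring axioms directly from the three defining identities of a bocs, namely the two counit laws $(1_W\otimes_A\epsilon)\mu=c_l$, $(\epsilon\otimes_A 1_W)\mu=c_r$ and the coassociativity $(\mu\otimes_A 1_W)\mu=(1_W\otimes_A\mu)\mu$, together with the $A$-bimodule linearity of $\mu$ and $\epsilon$. Throughout I would work in Sweedler-type notation, abbreviating $\mu(w)=\sum w_1\otimes w_2$ and suppressing the canonical identifications $A\otimes_A W\cong W\cong W\otimes_A A$. The proposition carries no conceptual difficulty; its content is entirely in keeping track of which maps are left- and which are right-$A$-linear and of where the coring axioms enter.

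For part 1 I would first note that $f*^{r} g=g(f\otimes_A 1_W)\mu$ is again right $A$-linear: the right $A$-action on $W\otimes_A W$ lives on the second factor, so $f\otimes_A 1_W$ is right-linear, and $\mu$ and $g$ are as well. The unit laws $\epsilon*^{r} g=g$ and $f*^{r}\epsilon=f$ then fall out of the two counit identities: the first is literally $g(\epsilon\otimes_A 1_W)\mu=g\,c_r=g$, and the second reduces, after pulling the scalars $f(w_1)\in A$ through $\epsilon$ by its bimodule-linearity, to $f(\sum w_1\epsilon(w_2))=f(w)$ via $(1_W\otimes_A\epsilon)\mu=c_l$. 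Associativity is the main computation of part 1: expanding both $((f*^{r} g)*^{r} h)(w)$ and $(f*^{r}(g*^{r} h))(w)$ in Sweedler notation and using the left-linearity of $\mu$ to move the scalar $f(w_1)$ inside $\mu$, both expressions collapse to $\sum h\!\big(g(f(w_1)w_{21})w_{22}\big)$ once coassociativity is applied in the matching order. Finally, for $\zeta(a)=\epsilon(a(-))$ I would check right-linearity (so $\zeta(a)\in Hom_A(W,A)$) and $\zeta(1)=\epsilon$, and then verify the anti-multiplicativity $\zeta(ab)=\zeta(b)*^{r}\zeta(a)$ by evaluating the right-hand side to $\sum\epsilon\big(a\,\epsilon(bw_1)\,w_2\big)$, which the bimodule laws for $\epsilon$ and the right counit law reduce to $a\,\epsilon(bw)=\epsilon(abw)=\zeta(ab)(w)$.

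For part 2 the first task is to see that $f*g(c)=\sum f(c_{1})g(c_{2})$ is well defined, i.e. that $(x,y)\mapsto f(x)g(y)$ descends to $W\otimes_A W$. This holds precisely because $f(xa)g(y)=f(x)\,a\,g(y)=f(x)g(ay)$, which uses that $f$ is right- and $g$ is left-linear; this is exactly the point at which the hypothesis $f,g\in Hom_{A^{e}}(W,A)$ is needed, and the same bimodule-linearity shows $f*g$ is again a bimodule map. The counit laws give $\epsilon*g=g$ and $f*\epsilon=f$ immediately, by pulling $\epsilon(c_1)$ through $g$ (respectively $\epsilon(c_2)$ through $f$) and applying the appropriate counit identity, and associativity is now transparent: both $((f*g)*h)(c)$ and $(f*(g*h))(c)$ equal $\sum f(c_1)g(c_2)h(c_3)$ in iterated Sweedler notation, using coassociativity to identify the two iterations of $\mu$ and the associativity of multiplication in $A$.

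The main obstacle is organizational rather than conceptual: every step turns on distinguishing the left and right $A$-module structures on $W$ and on respecting the balancing over $A$ in the tensor products. In part 1 the asymmetry of $*^{r}$ — and the fact that $\zeta$ is an anti-morphism rather than a morphism — originates from the identification $A\otimes_A W\cong W$, which forces the output of $f$ to act on the left of the surviving $W$-factor before $g$ is applied; closing the associativity calculation then requires transporting an $A$-scalar through $\mu$ via its left-linearity and invoking coassociativity in precisely the compatible order. Maintaining these conventions consistently across all three axioms and both products is where the care lies.
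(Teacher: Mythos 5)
Your proof is correct, but it is worth noting that the paper itself does not prove this proposition at all: it simply cites Brzezinski--Wisbauer, \emph{Corings and Comodules}, 17.8, where $Hom_A(W,A)$ and $Hom_{A^e}(W,A)$ appear as the (right) dual ring and the bidual ring of the coring $W$. What you have done is reconstruct the standard verification from the cited reference: the convolution-type products, the counit laws giving the unit axioms, coassociativity giving associativity, and the bimodule-linearity bookkeeping that makes $*^r$ and $*$ well defined and makes $\zeta$ land in $Hom_A(W,A)$ and reverse the order of multiplication. Your treatment of the two genuinely delicate points is right: the balancedness check that $(x,y)\mapsto f(x)g(y)$ descends to $W\otimes_A W$ in part 2, and the use of left-linearity of $\mu$ to push the scalar $f(w_1)$ inside $\mu$ before invoking coassociativity in part 1. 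The only place where you should be slightly more explicit in a written version is the associativity step of part 1: the Sweedler expression $\sum h\bigl(g(f(w_1)w_{21})w_{22}\bigr)$ is only meaningful once you know the trilinear map $(x,y,z)\mapsto h(g(f(x)y)z)$ is balanced in both tensor slots (or, equivalently, once you rewrite both sides as composites $h(g\otimes 1)(f\otimes 1\otimes 1)(\mu\otimes 1)\mu$ and $h(g\otimes 1)(f\otimes 1\otimes 1)(1\otimes\mu)\mu$ of well-defined maps); you gesture at this in your closing paragraph, and it is exactly the kind of detail the reference handles by working with composites of maps rather than elements. The benefit of your route is self-containedness, which the paper trades away for brevity by deferring to the literature.
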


We now describe the ring structures on $Hom_{A^{e}}(D(A),A)$ and $Hom_A(D(A),A)$.
\begin{Proposition}
Let $A$ be gendo-symmetric. \newline
1. $\zeta$, as defined in the previous proposition, is a ring anti-isomorphism $\zeta: A \rightarrow  Hom_A(D(A),A)$. \newline
2. With the ring structure on $Hom_{A^{e}}(D(A),A)$ as defined in the previous proposition, $Hom_{A^{e}}(D(A),A)$ is isomorphic to the center $Z(A)$ of $A$.
\end{Proposition}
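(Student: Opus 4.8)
The plan is to derive both parts from the ring anti-morphism $\zeta$ of the previous proposition, so for part 1 the only new content is that $\zeta\colon A \to Hom_A(D(A),A)$ is bijective. Since $A$ is gendo-symmetric it has dominant dimension at least $2$, so by Proposition \ref{domdim2} and the identification $\nu^{-1} \cong Hom_A(D(A),-)$ we get $Hom_A(D(A),A) = \nu^{-1}(A) \cong A$ as right $A$-modules; in particular both sides have the same finite $K$-dimension, and it suffices to prove $\zeta$ injective. I would note first that $\zeta(1) = \epsilon$. To show injectivity, suppose $\zeta(a) = 0$, i.e. $\epsilon(aw) = 0$ for every $w \in D(A)$. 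Writing $\mu(w) = \sum_i w_{1,i} \otimes w_{2,i}$ and using that $\mu$ is a bimodule map together with the bocs axiom $(\epsilon \otimes_A 1_W)\mu = c_r$, one gets $w = \sum_i \epsilon(w_{1,i})\,w_{2,i}$ for every $w$; applying this to $aw$ and using $\mu(aw)=a\,\mu(w)$ yields $aw = \sum_i \epsilon(a w_{1,i})\,w_{2,i} = 0$. Hence $a$ annihilates $D(A)$ from the left, and since $D(A)$ is faithful as a left $A$-module (it is the injective cogenerator of $A$-mod), $a = 0$. Thus $\zeta$ is injective, hence bijective, hence a ring anti-isomorphism, proving part 1.

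For part 2 I would work in the explicit bocs structure, identifying $D(A) \cong Ae \otimes_{eAe} eA$ as $A$-bimodules via Lemma \ref{anotherchara} and taking the comultiplication $\mu(xe \otimes ey) = (xe \otimes e) \otimes (e \otimes ey)$ and counit $\epsilon(xe \otimes ey) = xey$ from the second example of \ref{probasis}; under this identification $\zeta(a)(xe \otimes ey) = axey$. The first step is to determine which $\zeta(a)$ are $A$-bimodule maps. As $\zeta(a)$ is automatically right $A$-linear, being a bimodule map is equivalent to left $A$-linearity, $\zeta(a)(b\cdot w) = b\,\zeta(a)(w)$, which unwinds to $(ab - ba)\,xey = 0$ for all $b,x,y$, i.e. $(ab-ba)\,AeA = 0$ for all $b$. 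Since $A$ is a Morita algebra, Proposition \ref{APT} gives $Hom_A(A/AeA,A)=0$, which is exactly the vanishing of the left annihilator $\{z \in A : z\,AeA = 0\}$; therefore $ab = ba$ for all $b$, so $\zeta(a)$ is a bimodule map if and only if $a \in Z(A)$. Combined with the bijectivity of $\zeta$ from part 1, this identifies $Hom_{A^{e}}(D(A),A)$ with $\zeta(Z(A))$ and shows $\zeta$ restricts to a $K$-linear isomorphism $Z(A) \to Hom_{A^{e}}(D(A),A)$.

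It then remains to check that this restriction is multiplicative for the product $*$ of the previous proposition, which must be verified afresh because $*$ is not the product $*^{r}$ governing part 1. Using the single-term comultiplication, for $a,b \in Z(A)$ I would evaluate $(\zeta(a) * \zeta(b))(xe \otimes ey) = \zeta(a)(xe \otimes e)\,\zeta(b)(e \otimes ey) = (axe)(bey) = axebey$, and then use that $a,b$ are central together with $e^2 = e$ to identify both $axebey$ and $\zeta(ab)(xe \otimes ey) = abxey$ with the common value $xe\,ab\,y$. Since $\zeta(1) = \epsilon$ is the unit of $(Hom_{A^{e}}(D(A),A),*)$, this shows $\zeta|_{Z(A)}$ is a ring homomorphism; being already bijective onto $Hom_{A^{e}}(D(A),A)$, it is a ring isomorphism, and because $Z(A)$ is commutative the anti-isomorphism of part 1 becomes an honest isomorphism here.

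I expect the main obstacle to be precisely this last multiplicativity computation: one has to keep track simultaneously of the explicit comultiplication, the two distinct ring structures, and the centrality conditions, and make sure the rewriting $axebey = xe\,ab\,y$ uses only that $a$ and $b$ commute past $x$, $y$ and the idempotent $e$. A secondary point deserving care is the passage from the counit axiom to $a\cdot D(A)=0$ in part 1 and the faithfulness of $D(A)$ as a left module; if one prefers, the injectivity of $\zeta$ can instead be run inside the explicit model and deduced from $Hom_A(A/AeA,A)=0$, exactly as in the centrality step of part 2.
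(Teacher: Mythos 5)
Your proposal is correct, but it takes a genuinely different route from the paper at both steps, most substantially in part 2. For part 1 the paper makes the same reduction to injectivity via $Hom_A(D(A),A)\cong \nu^{-1}(A)\cong A$, but proves injectivity inside the explicit model $D(A)\cong Ae\otimes_{eAe}eA$: from $\epsilon(a(ce\otimes ed))=aced$ one gets $aAeA=0$, and the left annihilator of $AeA$ vanishes by \hyperref[APT]{\ref*{APT}}. Your argument instead uses only the coring axioms (the counit identity $w=\sum_i \epsilon(w_{1,i})w_{2,i}$ applied to $aw$) together with faithfulness of $D(A)$ as a left module; this is more general, since it shows that for any bocs $(A,W)$ the kernel of $\zeta$ is the left annihilator of $W$, and you correctly flag the paper's route as the alternative. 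For part 2 the paper proceeds quite differently: it defines the evaluation map $\psi: Hom_{A^{e}}(D(A),A)\rightarrow Z(eAe)$, $\psi(f)=f(e\otimes e)$, checks directly that $\psi$ is a ring isomorphism, and then quotes the result of \cite{FanKoe} (Lemma 2.2) that $z\mapsto eze$ is an isomorphism $Z(A)\rightarrow Z(eAe)$ for gendo-symmetric $A$. You instead locate $Hom_{A^{e}}(D(A),A)$ inside $Hom_A(D(A),A)=\zeta(A)$ as exactly $\zeta(Z(A))$, using that left $A$-linearity of $\zeta(a)$ unwinds to $(ab-ba)AeA=0$ and invoking the same annihilator vanishing from \hyperref[APT]{\ref*{APT}}, and then you verify multiplicativity of $\zeta|_{Z(A)}$ for the product $*$ by the direct computation $axebey=xe\,ab\,y=abxey$ for central $a,b$, which is correct. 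Your version buys a self-contained proof that avoids the external citation of Fang--K\"onig's $Z(A)\cong Z(eAe)$ and realizes the isomorphism of part 2 as a restriction of the map $\zeta$ from part 1, tying the two parts together; the paper's version, in exchange, produces the extra information that $Hom_{A^{e}}(D(A),A)\cong Z(eAe)$ via evaluation at $e\otimes e$, which is the concrete form of the statement and sidesteps any multiplicativity computation beyond $\mu(e\otimes e)=(e\otimes e)\otimes(e\otimes e)$.
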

\begin{proof}
We use the isomorphism of $A$-bimodule $D(A) \cong Ae \otimes_{eAe} eA$. \newline
1. Since $A$ and $Hom_A(D(A),A)$ have the same $K$-dimension, the only thing left to show is that $\zeta$ is injective. So assume that $\zeta(a)=\epsilon(a(-))=0$, for some $a \in A$.
This is equivalent to $\epsilon(ax)=0$ for every $x= ce \otimes ed \in Ae \otimes eA$.
Now $\epsilon(a(ce \otimes ed))=\epsilon(ace \otimes ed)=aced$. Thus, since $c,d$ were arbitrary, $aAeA$=0. This means that $a$ is in the left annihilator $L(AeA)$ of the two-sided ideal $AeA$. But $L(AeA)=0$, since $Hom_A(A/AeA,A)=0$, by \hyperref[APT]{ \ref*{APT}} and thus $a=0$. Therefore $\zeta$ is injective. \newline
2. Define $\psi: Hom_{A^{e}}(D(A),A) \rightarrow Z(eAe)$ by $\psi(f)=f(e \otimes e)$, for \newline $f \in Hom_{A^{e}}(D(A),A)$. First, we show that this is well-defined, that is $f(e \otimes e)$ is really in the center of $Z(eAe)$. Let $x \in eAe$. Then $x f(e \otimes e)=f(xe \otimes e) = f(e \otimes ex)=f(e \otimes e)x$ and therefore $f(e \otimes e) \in Z(eAe)$. Clearly, $\psi$ is $K$-linear. Now we show that the map is injective: Assume $\psi(f)=0$, which is equivalent to $f(e \otimes e)=0$.
Then for any $a, b \in A: f(ae \otimes eb)=0$, and thus $f=0$. \newline
Now we show that $\psi$ is surjective. Let $z \in Z(eAe)$ be given. Then define a map $f_z \in Hom_{A^{e}}(D(A),A)$ by $f_z( ae \otimes eb )=zaeb$. Then, since $z$ is in the center of $eAe$, $f$ is $A$-bilinear and obviously $\psi(f_z)=f_z(e \otimes e)=ze=z$.
$\psi$ also preserves the unit and multiplication: \newline
$\psi(\epsilon)=\epsilon(e \otimes e)=e^2=e$ and for two given $f,g \in Hom_{A^{e}}(D(A),A)$: 
$\phi(f *g)= \newline (f*g)(e \otimes e)= (f*g)(e \otimes e) = f(e \otimes e) g(e \otimes e)$, by the definition of $*$.
To finish the proof, we use the result from \cite{FanKoe}, Lemma 2.2., that the map $\phi: Z(A) \rightarrow Z(eAe)$, $\phi(z)=eze$ is a ring isomorphism in case $A$ is gendo-symmetric. 
\end{proof}
\section{Description of the module category of the bocs $(A,D(A))$ for a gendo-symmetric algebra}
\setcounter{subsection}{1}
\setcounter{Definition}{0}
Let $A$ be a gendo-symmetric algebra. In this section we describe the module category of the bocs $\mathcal{B}=(A,D(A))$ as a $K$-linear category. We will use the $A$-bimodule isomorphism $Ae \otimes_{eAe} eA \cong D(A)$ often without mentioning. Let $M$ be an arbitrary $A$-module. Define for a given $M$ the map $I_M : M \rightarrow Hom_A(D(A),M)$ by $I_M(m)=u_m$ for any $m \in M$, where $u_m: D(A) \rightarrow M$ is the map $u_m( ae \otimes eb)=maeb$ for any $a,b \in A$.
Before we get into explicit calculation, let us recall how $*$ is defined in this special case.
Let $f \in Hom_{\mathcal{B}}(L,M)$ and $g \in Hom_{\mathcal{B}}(M,N)$, then for $l \in L$ and $a,b \in A:$
$(g*f)(l)(ae \otimes eb)=g(f(l)(ae \otimes e))(e \otimes eb)$.

\begin{Proposition}
\label{IM}
1. $I_M$ is well defined. \newline
2. $I_M$ is injective, iff $M$ has dominant dimension larger or equal 1. \newline
3. $I_M$ is bijective, iff $M$ has dominant dimension larger or equal 2.
\end{Proposition}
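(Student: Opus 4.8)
The plan is to first recognise that the codomain of $I_M$ is nothing but the inverse Nakayama functor applied to $M$: since $\nu^{-1} \cong Hom_A(D(A),-)$, we have $Hom_A(D(A),M) \cong \nu^{-1}(M)$, so $I_M$ should be read as an $A$-module map $M \to \nu^{-1}(M)$. For part 1, I would observe that under the identification $D(A) \cong Ae \otimes_{eAe} eA$ the counit of the bocs is $\epsilon(ae \otimes eb)=aeb$, so that $u_m$ is exactly the composite of $\epsilon$ with the right $A$-linear map $A \to M$, $a \mapsto ma$, i.e. $u_m(w)=m\,\epsilon(w)$. Well-definedness on the tensor product (the balancing over $eAe$) and right $A$-linearity of $u_m$ are then inherited from the corresponding properties of $\epsilon$, which are part of the bocs data. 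A one-line computation $I_M(ma')(w)=ma'\epsilon(w)=(I_M(m)\cdot a')(w)$, using left-linearity of $\epsilon$ and the right action $(f\cdot a')(w)=f(a'w)$ on $Hom_A(D(A),M)$, shows that $I_M$ itself is right $A$-linear. This step is routine.

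For part 2, I would compute the kernel explicitly. The equation $I_M(m)=0$ means $maeb=0$ for all $a,b \in A$, that is $m\cdot AeA=0$; sending such an $m$ to the map $\bar a \mapsto ma$ identifies $\ker I_M$ with $Hom_A(A/AeA,M)$. Since a gendo-symmetric algebra is in particular a Morita algebra, the formula of Proposition \ref{APT} applies and gives $domdim(M)=\inf\{\,i\geq 0 \mid Ext^{i}(A/AeA,M)\neq 0\,\}$. In particular $domdim(M)\geq 1$ if and only if $Ext^{0}(A/AeA,M)=Hom_A(A/AeA,M)=0$, i.e. if and only if $\ker I_M=0$. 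This is precisely part 2.

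For part 3 I would argue both implications abstractly, using part 2 together with Proposition \ref{domdim2}. If $domdim(M)\geq 2$, then $I_M$ is injective by part 2 (as $2\geq 1$), while Proposition \ref{domdim2} gives $\nu^{-1}(M)\cong M$, whence $\dim_K Hom_A(D(A),M)=\dim_K \nu^{-1}(M)=\dim_K M$; an injective $K$-linear map between finite dimensional spaces of equal dimension is bijective, so $I_M$ is bijective. Conversely, if $I_M$ is bijective then, being a right $A$-module homomorphism, it is an isomorphism $M\cong \nu^{-1}(M)$, and Proposition \ref{domdim2} then yields $domdim(M)\geq 2$.

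The computations in parts 1 and 2 are essentially formal, so I expect the only points demanding care to be, first, the verification that $I_M$ is genuinely right $A$-linear, since in the converse direction of part 3 this is exactly what lets the bijection be fed into Proposition \ref{domdim2} as a module isomorphism; and second, the identification $\ker I_M \cong Hom_A(A/AeA,M)$, which is the real substantive step. Once that identification is in place, parts 2 and 3 follow cleanly from the APT formula of Proposition \ref{APT} and from Proposition \ref{domdim2} together with the dimension count, and no further calculation with the comultiplication is needed.
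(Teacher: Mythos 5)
Your proposal is correct and follows essentially the same route as the paper: direct verification of $A$-linearity for part 1 (your counit framing $u_m = m\,\epsilon(-)$ is just a repackaging of the same computation), identification of $\ker I_M$ with $Hom_A(A/AeA,M)$ plus the APT formula for part 2, and Proposition \ref{domdim2} combined with the dimension count for both directions of part 3. No gaps.
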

\begin{proof}
1. We have to show two things:
First, $u_m$ is $A$-linear for any $m \in M$: \newline $u_m((ae \otimes eb)c)=u_m(ae \otimes ebc)=m aebc=(maeb)c=u_m(ae \otimes eb)c.$ Second, $I_M$ is also $A$-linear: $I_M(mc)(ae \otimes eb)=u_{mc}(ae \otimes eb)=mcaeb=u_m(cae \otimes eb)= (u_m c)(ae \otimes eb)=(I_M(m)c)(ae \otimes eb)$. \newline
2. $I_M$ is injective iff ($m=0 \Leftrightarrow u_m=0$). Now $u_m=0$ is equivalent to $maeb=0$ for any $a,b \in A$. This is equivalent to the condition that the two-sided ideal $AeA$ annihilates $m$. Thus there is a nonzero $m$ with $u_m=0$ iff $Hom_A(A/AeA,M) \neq 0$ iff $M$ has dominant dimension zero by \hyperref[APT]{ \ref*{APT}}. \newline 
3. By \hyperref[domdim2]{ \ref*{domdim2}} $M$ has dominant dimension larger or equal two iff $M \cong \nu^{-1}(M)$. \newline Thus 3. follows by 2. since an injective map between modules of the same dimension is a bijective map. 
\end{proof}

\begin{Lemma}
\label{isonu}
For any module $M$, there is an isomorphism \newline
$Hom_A(\mu,M) \psi: Hom_A(D(A),Hom_A(D(A),M)) \rightarrow Hom(D(A),M) $ and thus \newline $\nu^{-1}(M) \cong \nu^{-2}(M)$.
It follows that every module of the form $\nu^{-1}(M)$ has dominant dimension at least two.
\end{Lemma}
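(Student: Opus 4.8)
The plan is to observe that the entire statement rests on a single fact already established: for a gendo-symmetric algebra $A$, the comultiplication $\mu : D(A) \rightarrow D(A) \otimes_A D(A)$ of the bocs $(A,D(A))$ is an $A$-bimodule isomorphism. This is recorded in the Remark following Theorem \hyperref[mainresult]{ \ref*{mainresult}} (and is visible inside that proof, where it is shown that $\mu$ must be an isomorphism once $Hom_A(D(A),A) \cong A$). Granting this, I would argue that the map $Hom_A(\mu,M)\psi$ is a composite of two isomorphisms. The adjunction map $\psi : Hom_A(D(A),Hom_A(D(A),M)) \rightarrow Hom_A(D(A) \otimes_A D(A),M)$ is the tensor-hom adjunction and is therefore always an isomorphism, independently of any hypothesis on $A$. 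The map $Hom_A(\mu,M) : Hom_A(D(A) \otimes_A D(A),M) \rightarrow Hom_A(D(A),M)$ is obtained by applying the contravariant functor $Hom_A(-,M)$ to $\mu$; since $\mu$ is an isomorphism, so is $Hom_A(\mu,M)$. Composing, $Hom_A(\mu,M)\psi$ is an isomorphism, which is the first assertion.

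Next I would translate this into a statement about Nakayama functors using the identification $\nu^{-1} \cong Hom_A(D(A),-)$ recalled in the preliminaries. The target $Hom_A(D(A),M)$ is $\nu^{-1}(M)$, while the source $Hom_A(D(A),Hom_A(D(A),M))$ is $\nu^{-1}(\nu^{-1}(M)) = \nu^{-2}(M)$. Hence the displayed isomorphism reads $\nu^{-2}(M) \cong \nu^{-1}(M)$, establishing the second claim.

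For the final assertion I would set $N := \nu^{-1}(M)$ and invoke Proposition \hyperref[domdim2]{ \ref*{domdim2}}, which (for gendo-symmetric $A$) says $N$ has dominant dimension at least two if and only if $\nu^{-1}(N) \cong N$. But $\nu^{-1}(N) = \nu^{-1}(\nu^{-1}(M)) = \nu^{-2}(M)$, and the isomorphism just obtained gives $\nu^{-2}(M) \cong \nu^{-1}(M) = N$. Thus $N = \nu^{-1}(M)$ satisfies the criterion, so every module of the form $\nu^{-1}(M)$ has dominant dimension at least two.

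I do not anticipate a genuine obstacle here; the argument is essentially a bookkeeping of two standard isomorphisms followed by a citation of Proposition \hyperref[domdim2]{ \ref*{domdim2}}. The only points demanding care are checking that $\psi$ has the correct variance so that the composite lands in $Hom_A(D(A),M)$, and confirming that the two identifications of $\nu^{-1}$ and $\nu^{-2}$ via $Hom_A(D(A),-)$ are applied consistently; the substantive input, namely that $\mu$ is an isomorphism rather than a mere injection, is supplied by Theorem \hyperref[mainresult]{ \ref*{mainresult}} and its Remark.
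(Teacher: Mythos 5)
Your proposal is correct and follows essentially the same route as the paper's own proof: both arguments combine the tensor-hom adjunction isomorphism $\psi$ with the fact that $\mu$ is an isomorphism (from Theorem \hyperref[mainresult]{\ref*{mainresult}} and its Remark) to conclude $Hom_A(\mu,M)\psi$ is an isomorphism, and then cite Proposition \hyperref[domdim2]{\ref*{domdim2}} for the dominant dimension claim. Your write-up merely makes explicit the bookkeeping (setting $N=\nu^{-1}(M)$ and checking the criterion $\nu^{-1}(N)\cong N$) that the paper leaves implicit.
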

\begin{proof}
The result follows, since $\psi$ is the canonical isomorphism \newline $\psi: Hom_A(D(A),Hom_A(D(A),M)) \rightarrow Hom_A(D(A) \otimes_A D(A),M) $ and since $\mu$ is an isomorphism also $Hom_A(\mu,M)$ is an isomorphism.
That $\nu^{-1}(M)$ has dominant dimension at least two, follows now from \hyperref[domdim2]{ \ref*{domdim2}}. 
\end{proof}

We define a functor $\phi: mod-A \rightarrow mod-\mathcal{B}$ by $\phi(M)=M$ and $\phi(f)=I_N f$ for an $A$-homomorphism $f :M \rightarrow N$. $\phi$ is obviously $K$-linear. The next result shows that it really is a functor and calculates its kernel on objects. 
\begin{theorem}
\label{mainresult2}
1. $\phi$ is a $K$-linear functor. \newline
2. $\phi(M)=0$ iff the two-sided ideal $AeA$ annihilates $M$, that is $M$ is a an $A/AeA$-module. All modules $M$ that are annihilated by $AeA$ have dominant dimension zero. \newline
3. By restricting $\phi$ to $Dom_2$, one gets an equivalence of $K$-linear categories $Dom_2 \rightarrow Dom_2^{\mathcal{B}}$, where $Dom_2^{\mathcal{B}}$ denotes the full subcategory of $mod-\mathcal{B}$ having objects all modules of dominant dimension at least 2. \newline
4. Any module $A$-module $M$ is isomorphic to $\nu^{-1}(M)$ in $\mathcal{B}$-mod and thus $\mathcal{B}$-mod is equivalent to $Dom_2$ as $K$-linear categories, which is equivalent to the module category mod-$eAe$.
\end{theorem}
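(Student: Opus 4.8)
The plan is to establish the four parts of Theorem \ref{mainresult2} in the order given, leaning heavily on the explicit description of the $*$-composition recalled just before the statement and on the structural facts about $\nu^{-1}$ and dominant dimension already proved. The central technical identity I expect to use repeatedly is that for $A$-linear maps $f: M \to N$ and $g: N \to L$, the composites in $\mathcal{B}$ behave compatibly with $I_N$, i.e. that $\phi(g)*\phi(f) = \phi(gf)$, which will simultaneously give functoriality (part 1) and set up the equivalence.

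\textbf{Parts 1 and 2.} For part 1, I would first verify that $\phi$ preserves identities, which amounts to checking $I_M = 1_M$ in the bocs sense, i.e. that $I_M = \operatorname{Hom}_A(\epsilon,M)\xi$ with $\epsilon(ae \otimes eb) = aeb$; this is a direct unwinding of the counit formula from Example \ref{probasis}.2. Then I would check $\phi(g)*\phi(f) = \phi(gf)$ by plugging into the explicit formula $(g*f)(l)(ae \otimes eb) = g(f(l)(ae \otimes e))(e \otimes eb)$ and substituting $f = I_N f_0$, $g = I_L g_0$ for $A$-linear $f_0, g_0$; using $I_N(n)(ae \otimes eb) = naeb$ the right-hand side collapses to $g_0 f_0(l)\, aeb = I_L(g_0 f_0(l))(ae\otimes eb)$, giving $\phi(g_0)*\phi(f_0)=\phi(g_0 f_0)$. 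Part 2 is essentially already contained in Proposition \ref{IM}.2: $\phi(M)=0$ means $I_M = 0$, i.e. $u_m = 0$ for all $m$, which as shown there is equivalent to $AeA$ annihilating $M$; and by Proposition \ref{APT} any such module has $\operatorname{Hom}_A(A/AeA,M)\neq 0$, hence dominant dimension zero.

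\textbf{Part 3.} Restricting $\phi$ to $Dom_2$, Proposition \ref{IM}.3 tells me that $I_M$ is bijective exactly on $Dom_2$, so for $M, N \in Dom_2$ the map $\operatorname{Hom}_A(M,N) \to \operatorname{Hom}_{\mathcal{B}}(M,N) = \operatorname{Hom}_A(M,\operatorname{Hom}_A(D(A),N))$ sending $f \mapsto I_N f$ is a bijection (it is injective because $I_N$ is injective, and surjective because any $g: M \to \operatorname{Hom}_A(D(A),N)$ factors as $I_N f$ with $f = I_N^{-1} g$ when $I_N$ is invertible). Together with part 1 this makes $\phi$ fully faithful on $Dom_2$. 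It is essentially surjective onto $Dom_2^{\mathcal{B}}$ by definition of $\phi$ on objects as the identity, so $\phi$ restricts to an equivalence $Dom_2 \to Dom_2^{\mathcal{B}}$.

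\textbf{Part 4, the main obstacle.} The crux is showing that an arbitrary $A$-module $M$ is isomorphic in $\mathcal{B}$-mod to $\nu^{-1}(M)$, which upgrades the equivalence on $Dom_2$ to an equivalence on all of $\mathcal{B}$-mod. Here I would exhibit an explicit $\mathcal{B}$-isomorphism between $M$ and $\nu^{-1}(M) \cong \operatorname{Hom}_A(D(A),M)$: the natural candidate is $I_M$ itself, viewed as a $\mathcal{B}$-morphism $M \to \nu^{-1}(M)$, with inverse built from the isomorphism $\operatorname{Hom}_A(\mu,M)\psi$ of Lemma \ref{isonu}. The key points are that $\nu^{-1}(M) \in Dom_2$ by Lemma \ref{isonu}, that $I_{\nu^{-1}(M)}$ is therefore bijective, and that the counit–comultiplication axioms force $I_M$ and the map induced by $\operatorname{Hom}_A(\mu,M)\psi$ to be mutually inverse $\mathcal{B}$-morphisms; verifying that these candidate maps are genuinely inverse to each other \emph{in the bocs category} (as opposed to the underlying module category) is the delicate calculation, since it requires invoking the coassociativity and counit identities rather than just $A$-linearity. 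Once this is done, every object of $\mathcal{B}$-mod is $\mathcal{B}$-isomorphic to one in $Dom_2^{\mathcal{B}}$, so $\mathcal{B}$-mod $\simeq Dom_2$, and the final identification $Dom_2 \simeq \operatorname{mod-}eAe$ follows from the Morita–Tachikawa correspondence together with $eAe$ being the symmetric algebra with $A \cong \operatorname{End}_{eAe}(M)$.
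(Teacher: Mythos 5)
Parts 1, 2 and 3 of your proposal are correct and coincide with the paper's own proof: the identity check against the counit, the collapse of $(g*f)(l)(ae\otimes eb)$ to $g_0f_0(l)aeb$, the characterization of zero objects via $I_M$, and the dense/full/faithful argument on $Dom_2$ are exactly the arguments given there.

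The gap is in part 4, which is the heart of the theorem, and your plan stops precisely where the real work begins. First, your candidate maps do not live in the correct Hom-spaces. Since $Hom_{\mathcal{B}}(M,N)=Hom_A(M,Hom_A(D(A),N))$, the map $I_M\colon M\to Hom_A(D(A),M)$ is an element of $Hom_{\mathcal{B}}(M,M)$ --- indeed, by your own part 1 it \emph{is} the identity $1_M$ in $\mathcal{B}$-mod --- and not an element of $Hom_{\mathcal{B}}(M,\nu^{-1}(M))=Hom_A(M,\nu^{-2}(M))$. (Charitably read, you mean $\phi(I_M)=I_{\nu^{-1}(M)}\circ I_M$, which does typecheck and in fact agrees with the paper's forward map $f=(Hom_A(\mu,M)\psi)^{-1}I_M$, but establishing that agreement is itself a computation.) Likewise, a map ``built from $Hom_A(\mu,M)\psi$'' goes $\nu^{-2}(M)\to\nu^{-1}(M)$ and therefore cannot be an element of $Hom_{\mathcal{B}}(\nu^{-1}(M),M)=Hom_A(\nu^{-1}(M),\nu^{-1}(M))$. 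The idea your proposal is missing is the paper's choice of inverse: take $g=id_{\nu^{-1}(M)}$, which is literally an element of $Hom_A(\nu^{-1}(M),\nu^{-1}(M))=Hom_{\mathcal{B}}(\nu^{-1}(M),M)$, even though no reasonable $A$-module map $\nu^{-1}(M)\to M$ exists when $M\notin Dom_2$. Second, asserting that the counit--comultiplication axioms ``force'' the maps to be mutually inverse is not a proof; the paper verifies $g*f=I_M$ and $f*g=I_{\nu^{-1}(M)}$ by explicit element computations with the formula $(g*f)(l)(ae\otimes eb)=g(f(l)(ae\otimes e))(e\otimes eb)$, the decisive step being $l(ae\otimes eba'eb')=l(aeba'e\otimes eb')$, which uses that the tensor product $Ae\otimes_{eAe}eA$ is balanced over $eAe$ --- not coassociativity. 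Only after these identities does part 1 identify $I_M$ and $I_{\nu^{-1}(M)}$ as the respective identity morphisms, yielding $M\cong\nu^{-1}(M)$ in $\mathcal{B}$-mod. Your concluding reduction (Lemma \ref{isonu} gives $\nu^{-1}(M)\in Dom_2$, then part 3) is fine; note only that the paper obtains the final equivalence $Dom_2\simeq \text{mod-}eAe$ from \cite{APT}, Lemma 3.1, rather than from the Morita--Tachikawa correspondence as such.
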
 
\begin{proof}
1. It was noted above that $\phi$ is $K$-linear. We have to show $\phi(id_M)= Hom(\epsilon,M) \zeta$, where $\zeta : M \rightarrow Hom_A(A,M)$ is the canconical isomorphism, and $\phi(g \circ f)= I_N(g) * I_M(f)$, where $f:L \rightarrow M$ and $g: M \rightarrow N$ are $A$-module homomorphisms.
To show the first equality $\phi(id_M)= Hom(\epsilon,M) \zeta$, just note that $Hom(\epsilon,M) \zeta(m)(ae \otimes eb)=l_m(\epsilon(ae \otimes eb))=maeb=I_M(m)(ae \otimes eb)$, where $l_m : A \rightarrow M$ is left multiplication by $m$. \newline
Next we show the above equality $\phi(g \circ f)= I_N(g) * I_M(f)$: \newline 
Let $l \in L$ and $a,b \in A$.
First, we calculate $\phi(g \circ f)(l)(ae \otimes eb)=g(f(l))aeb.$ \newline
Second, $I_N(g) * I_M(f)(l)(ae \otimes eb)=I_N(g)(I_M(f)(l)(ae \otimes e))(e \otimes eb)= \newline I_N(g)(u_{f(l)}(ae \otimes e))(e \otimes eb)=I_N(g)(f(l)(ae))(e \otimes eb)=g(f(l))aeb.$ \newline Thus $\phi(g \circ f)= I_N(g) * I_M(f)$ is shown. \newline
2. A module $M$ is zero in the $K$-category mod-$\mathcal{B}$ iff its endomorphism ring $End_{\mathcal{B}}(M)$ is zero iff the identity of $End_{\mathcal{B}}(M)$ is zero. Thus $M$ is zero in mod-$\mathcal{B}$ iff $I_M(m)=0$ for every $m \in M$. But $I_M(m)=0$ iff $mAeA=0$ and so $\phi(M)=0$ iff $MAeA=0$. To see that such an $M$ must have dominant dimension zero, note that $AeA$ annihilates no element of $M$ iff $M$ has dominant dimension larger or equal 1 by \hyperref[APT]{ \ref*{APT}}. \newline
3. Restricting $\phi$ to $Dom_2$, $\phi$ is obviously still dense by the definition of $Dom_2^{\mathcal{B}}$. Now recall that by the previous proposition a module $M$ has dominant dimension at least two iff $I_M$ is an isomorphism. Let now $h \in Hom_{\mathcal{B}}(M,N)$ be given with $M,N \in Dom_2^{\mathcal{B}}$. Then $\phi( I_N^{-1}h)=I_N(I_N^{-1}h))=h$ and $\phi$ is full. Assume $\phi(h)=I_N h=0$, then $h=0$, since $I_N$ is an isomorphism, and so $\phi$ is faithful. \newline
4. Define $f \in Hom_{\mathcal{B}}(M,\nu^{-1}(M))$ as $f=  (Hom_A(\mu,M) \psi)^{-1} I_M$ and \newline $g \in Hom_{\mathcal{B}}(\nu^{-1}(M),M)$ as $g=id_{\nu^{-1}(M)}$. 
We show that $f*g=I_{\nu^{-1}}(M)$ and $g*f=I_M$, which by 1. are the identities of  $Hom_{\mathcal{B}}(\nu^{-1}(M),\nu^{-1}(M))$ and $Hom_{\mathcal{B}}(M,M)$. This shows that any module $M$ is isomorphic to $\nu^{-1}(M)$ in $\mathcal{B}$-mod. \newline
Let $m \in M$ and $a,b \in A$. \newline
Then $(g*f)(m)(ae \otimes eb)=g(f(m)(ae \otimes e))(e \otimes eb)= \newline ((Hom_A(\mu,M) \psi)^{-1} I_M(m))((ae \otimes e))(e \otimes eb))=maeb=I_M(m)(ae \otimes eb)$, where we used that $g$ is the identity on $\nu^{-1}(M)$.
Next we show that $f*g=I_{\nu^{-1}}(M)$:
Let $l \in \nu^{-1}(M)=Hom_A(D(A),M)$. \newline First, note that by definition $I_{\nu^{-1}}(M)(l)(ae \otimes eb)(a'e \otimes eb')=(l aeb)(a' e \otimes eb')=l(aeba'e \otimes eb')$. 
Next $(f*g)(l)(ae \otimes eb)(a'e \otimes eb')=f(g(l)(ae \otimes eb)(a'e \otimes eb')= \newline f(l(ae \otimes eb))(a'e \otimes eb')=(Hom_A(\mu,M) \psi)^{-1} I_M(l(ae \otimes eb)(a'e \otimes eb')=l(ae \otimes eba'eb')=l(aeba'e \otimes eb')$, where we used in the last step that we tensor over $eAe$. \newline
Now we use \hyperref[isonu]{ \ref*{isonu}}, to show that every module of the form $\nu^{-1}(M)$ has dominant dimension at least two.
Since every module $M$ is isomorphic to $\nu^{-1}(M)$, $\mathcal{B}-mod$ is isomorphic to $Dom_2^{\mathcal{B}}$, which is isomorphic to $Dom_2$ by 3.
Now recall that there is an equivalence of categories mod-$eAe$ $\rightarrow Dom_2$ (this is a special case of \cite{APT} Lemma 3.1.). Combining all those equivalences, we get that $\mathcal{B}-mod$ is equivalent to the module category mod-$eAe$.
\end{proof}

\begin{Corollary}
In case an $A$-module $M$ has dominant dimension larger or equal $2$, the map \newline $Hom_A(M,I_M) : End_A(M) \rightarrow End_{\mathcal{B}}(M)$ is a $K$-algebra isomorphism. In particular $A \cong End_A(A) \cong End_{\mathcal{B}}(A)$, since $A$ has dominant dimension at least 2.
\end{Corollary}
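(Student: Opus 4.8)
The plan is to recognize the map $Hom_A(M,I_M)$ as nothing more than the functor $\phi$ of Theorem \ref{mainresult2} restricted to the endomorphism ring of $M$, and then to read off the isomorphism from the two properties of $\phi$ that are already established there. First I would observe that for $f \in End_A(M)$ the map $Hom_A(M,I_M)$ sends $f$ to the post-composite $I_M \circ f$, which by the defining formula $\phi(f)=I_N f$ (taken with $N=M$) is exactly $\phi(f)$. Since $Hom_{\mathcal{B}}(M,M)=Hom_A(M,Hom_A(D(A),M))$ is precisely the target of $Hom_A(M,I_M)$, this identifies $Hom_A(M,I_M)$ with the restriction of $\phi$ to a map $End_A(M)\to End_{\mathcal{B}}(M)$. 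Hence it suffices to show this restriction is a $K$-algebra isomorphism.

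Next I would invoke the functoriality proved in Theorem \ref{mainresult2}.1. Because $\phi$ is a $K$-linear functor, it is $K$-linear on morphisms and satisfies $\phi(id_M)=1_M$ together with $\phi(g \circ f)=\phi(g)*\phi(f)$. Read off for $f,g \in End_A(M)$, these identities say precisely that $\phi|_{End_A(M)}$ is a unital $K$-algebra homomorphism from $(End_A(M),\circ)$ to $(End_{\mathcal{B}}(M),*)$. It then remains to see it is bijective. Since $M$ has dominant dimension at least $2$, that is $M \in Dom_2$, Theorem \ref{mainresult2}.3 tells us that $\phi$ restricted to $Dom_2$ is fully faithful; in particular the induced map $Hom_A(M,M)\to Hom_{\mathcal{B}}(M,M)$ is bijective. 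A bijective $K$-algebra homomorphism is a $K$-algebra isomorphism, giving the first assertion.

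Finally, for the particular case I would apply the result to $M=A_A$. Since $A$ is gendo-symmetric it has dominant dimension at least $2$ by definition, so $A$ lies in $Dom_2$ and the first part yields $End_A(A)\cong End_{\mathcal{B}}(A)$ as $K$-algebras. Combining this with the standard identification $A \cong End_A(A_A)$, under which every right $A$-endomorphism of $A$ is left multiplication by a unique element of $A$, gives $A \cong End_{\mathcal{B}}(A)$. I do not expect a genuine obstacle here, as the statement is essentially a restatement of Theorem \ref{mainresult2} on endomorphism rings; the only point requiring a moment of care is checking that the functor structure of $\phi$ (its preservation of $*$ and of units) is exactly what upgrades $Hom_A(M,I_M)$ from a $K$-linear bijection to a multiplicative, unital, hence $K$-algebra, isomorphism.
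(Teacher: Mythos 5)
Your proposal is correct and follows essentially the same route as the paper: the paper's one-line proof invokes Proposition \ref{IM}.3 (that $I_M$ is an isomorphism when $\operatorname{domdim}(M)\geq 2$) for bijectivity, with multiplicativity and unitality implicit from the functoriality of $\phi$ established in Theorem \ref{mainresult2}.1, which is exactly what you spell out. Your only deviation is quoting the full faithfulness of $\phi$ on $Dom_2$ (Theorem \ref{mainresult2}.3) instead of Proposition \ref{IM}.3 directly, but since that full faithfulness is itself proved from $I_M$ being an isomorphism, this is the same argument one layer removed.
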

\begin{proof}
This follows since $I_M$ is an isomorphism, in case $M$ has dominant dimension at least two by \hyperref[IM]{ \ref*{IM}} 3. 

\end{proof}

\begin{example}
Let $n \geq 2$ and $A:=K[x]/(x^n)$ and $J$ the Jacobson radical of $A$. Let \newline $M:= A \oplus \bigoplus_{k=1}^{n-1}{J^k}$ and $B:=End_A(M)$. Then $B$ is the Auslander algebra of $A$ and $B$ has $n$ simple modules. The idempotent $e$ is in this case primitiv and corresponds to the unique indecomposable projective-injective module $Hom_A(M,A)$. By the previous theorem, the kernel of $\phi$ is isomorphic to the module category $mod-(A/AeA)$. Here $A/AeA$ is isomorphic to the preprojective algebra of type $A_{n-1}$ by \cite{DR} chapter 7. \newline
We describe the bocs module category $\mathcal{B}$-mod of $(B,D(B))$ for $n=2$ explicitly. In this case $B$ is isomorphic to the Nakayama algebra with Kupisch series $[2,3]$. Then $B$ has five indecomposable modules. Let $e_0$ be the primitive idempotent corresponding to the indecomposable projective module with dimension two and $e_1$ the primitive idempotent corresponding to the indecomosable projective module with dimension three. Then $e_1A$ is the unique minimal faithful indecomposable projective-injective module. Let $S_i$ denote the simple $B$-modules. The only indecomposable module annihilated by $Be_1B$ is $S_0$, which is therefore isomorphic to zero in the bocs module category. The two indecomposable projective modules $P_0=e_0B$ and $P_1=e_1B$ have dominant dimension at least two and thus are not isomorphic. The only indecomposable module of dominant dimension 1 is $S_1$ and the only indecomposable module of dominant dimension zero, which is not isomorphic to zero in $\mathcal{B}$-mod, is $D(Be_0)$. Now let $X=S_1$ or $X=D(Be_0)$, then $\nu^{-1}(X)=Hom_B(D(B),X) \cong e_0B$. Thus in $\mathcal{B}$-mod $S_1 \cong e_0B \cong D(Be_0)$ and $e_1B$ are up to isomorphism the unique indecomposable objects. 
\end{example}

\ \newline
\noindent Ren\'{e} Marczinzik, Institute of algebra and number theory, University of Stuttgart, Pfaffenwaldring 57, 70569 Stuttgart, Germany \newline
E-Mail address: marczire@mathematik.uni-stuttgart.de

\end{document}